\newdimen\plusheight
\def\+{\;\lower\plusheight\hbox{$+$}\;}
\newdimen\minusheight
\def\-{\;\lower\minusheight\hbox{$-$}\;}
\newdimen\cdotsheight
\def\cds{\lower\cdotsheight\hbox{$\cdots$}}
\def\leqalignno#1{\displ@y \tabskip\z@ plus\@ne fil
  \halign to\displaywidth{\hfil$\@lign\displaystyle{##}$\tabskip\z@skip
    &$\@lign\displaystyle{{}##}$\hfil\tabskip\z@ plus\@ne fil
    &\kern-\displaywidth\rlap{$\@lign\hbox{\rm##}$}\tabskip\displaywidth\crcr
    #1\crcr}}
\newcommand{\eb}{\begin{equation}}
\newcommand{\ee}{\end{equation}}
\renewcommand{\Im}{\operatorname{Im}}
\newcommand{\df}{\dfrac}
\newcommand{\tf}{\tfrac}
\renewcommand{\Re}{\operatorname{Re}}
\renewcommand{\Im}{\operatorname{Im}}
 \renewcommand{\a}{\alpha}
\renewcommand{\b}{\beta}
\renewcommand{\d}{{\delta}}
\newcommand{\G}{\Gamma}
\renewcommand{\l}{\lambda}
\renewcommand{\t}{\tau}
\renewcommand{\Re}{\textup{Re}}
\renewcommand{\Im}{\textup{Im}}
\renewcommand{\(}{\left\(}
\renewcommand{\)}{\right\)}
\renewcommand{\[}{\left\[}
\renewcommand{\]}{\right\]}
\numberwithin{equation}{section}
 \theoremstyle{plain}
\newtheorem{theorem}{Theorem}[section]
\newtheorem{lemma}[theorem]{Lemma}
\newtheorem{corollary}[theorem]{Corollary}
\newtheorem{remark}[theorem]{Remark}
\begin{document}
\title[Arithmetical identities]
{Two General Series Identities Involving Modified Bessel Functions and a Class of Arithmetical Functions}
\author{Bruce C.~Berndt, Atul Dixit, Rajat Gupta, Alexandru Zaharescu}
\thanks{2020 \textit{Mathematics Subject Classification.} Primary 33C10; Secondary 11M06, 11N99.\\
\textit{Keywords and phrases.} Bessel functions, functional equations, classical arithmetic functions}
\address{Department of Mathematics, University of Illinois, 1409 West Green
Street, Urbana, IL 61801, USA} \email{berndt@illinois.edu}
\address{Department of Mathematics, Indian Institute of Technology Gandhinagar, Palaj, Gandhinagar 382355, Gujarat, India}\email{adixit@iitgn.ac.in}
\address{Department of Mathematics, Indian Institute of Technology Gandhinagar, Palaj, Gandhinagar 382355, Gujarat, India}\email{rajat\_gupta@iitgn.ac.in}
\address{Department of Mathematics, University of Illinois, 1409 West Green
Street, Urbana, IL 61801, USA; Institute of Mathematics of the Romanian
Academy, P.O.~Box 1-764, Bucharest RO-70700, Romania}
\email{zaharesc@illinois.edu}

\begin{abstract}
    We consider two sequences $a(n)$ and $b(n)$, $1\leq n<\infty$, generated by Dirichlet series
    $$\sum_{n=1}^{\infty}\df{a(n)}{\lambda_n^{s}}\qquad\text{and}\qquad
    \sum_{n=1}^{\infty}\df{b(n)}{\mu_n^{s}},$$
        satisfying a familiar functional equation involving the gamma function $\Gamma(s)$. Two general identities are established.  The first involves the modified Bessel function $K_{\mu}(z)$, and can be thought of as a `modular' or `theta' relation wherein modified Bessel functions, instead of exponential functions, appear.  Appearing in the second identity are $K_{\mu}(z)$, the Bessel functions of imaginary argument $I_{\mu}(z)$, and ordinary hypergeometric functions ${_2F_1}(a,b;c;z)$.  Although certain special cases appear in the literature, the general identities are new.  The arithmetical functions appearing in the identities include Ramanujan's arithmetical function $\tau(n)$; the number of representations of $n$ as a sum of $k$ squares $r_k(n)$; and primitive Dirichlet characters $\chi(n)$.
    \end{abstract}
\maketitle
\section{Introduction}

Our goal is to establish two general identities involving arithmetical functions whose generating functions are Dirichlet series satisfying Hecke's functional equation. For example, two of these arithmetical functions are $r_k(n)$, the number of representations of $n$ as a sum of $k$ squares, and Ramanujan's arithmetical function $\tau(n)$.  Our general theorems involve the Bessel function of imaginary argument $I_{\nu}(z)$ and the modified Bessel function $K_{\nu}(z)$, defined, respectively, in \eqref{b1} and \eqref{b2} below.

One of the identities is a modular or theta relation in which, roughly, the exponential functions are replaced by modified Bessel functions.  The other is a transformation formula in which ordinary hypergeometric functions appear on one side.  Certain special cases, which we cite in the sequel, of each of the two primary identities have appeared in the literature.  However, the general theorems and the majority of the examples are new.

We consider the class of arithmetical functions studied by K.~Chandrasekharan and R.~Narasimhan \cite{cn1}.
Let $a(n)$ and $b(n)$, $1\leq n<\infty$, be two sequences of complex numbers, not identically 0.  Set
\begin{equation}\label{18}
\varphi(s):=\sum_{n=1}^{\infty}\df{a(n)}{\lambda_n^s}, \quad \sigma>\sigma_a; \qquad
\psi(s):=\sum_{n=1}^{\infty}\df{b(n)}{\mu_n^s}, \quad \sigma>\sigma_a^*,
\end{equation}
where  throughout our paper, $\sigma=\Re(s)$, $\{\l_n\}$ and $\{\mu_n\}$ are two sequences of positive numbers, each tending to $\infty$, and $\sigma_a$ and $\sigma_a^*$ are the (finite) abscissae of absolute convergence for $\varphi(s)$ and $\psi(s)$, respectively.  Assume  that $\varphi(s)$ and $\psi(s)$ have analytic continuations into the entire complex plane $\mathbb{C}$ and are analytic on $\mathbb{C}$ except for a finite set $\bf{S}$ of poles.
Suppose that for some $\delta>0$, $\varphi(s)$ and $\psi(s)$  satisfy a functional equation of the form
\begin{equation}\label{19}
\chi(s):=(2\pi)^{-s}\Gamma(s)\varphi(s)=(2\pi)^{s-\delta}\Gamma(\delta-s)\psi(\delta-s).
\end{equation}
Chandrasekharan and Narasimhan proved that the functional equation \eqref{19} is equivalent to Theorems \ref{thmmodular} and \ref{thmriesz} below \cite[p.~6, Lemmas 4, 5]{cn1}, the first of which is due to Bochner \cite{bochner}.  Hence, the validity of any one of \eqref{19}, Theorem \ref{thmmodular}, and Theorem \ref{thmriesz} implies the truth of the other two identities.

\begin{theorem}\label{thmmodular} The functional equation \eqref{19} is equivalent to the `modular' relation
\begin{equation}\label{modular}
\sum_{n=1}^{\infty}a(n)e^{-\lambda_n x}=\left(\df{2\pi}{x}\right)^{\delta}\sum_{n=1}^{\infty}b(n)e^{-4\pi^2\mu_n/x}+P(x), \qquad \Re(x)>0,
\end{equation}
where
\begin{equation*}
P(x):=\frac{1}{2\pi i}\int_{\mathcal{C}}(2\pi)^z\chi(z)x^{-z}dz,
\end{equation*}
where $\mathcal{C}$ is a curve or curves encircling all of $\bf{S}$.
\end{theorem}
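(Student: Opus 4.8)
The plan is to prove the two implications of the equivalence separately, by Mellin-transform methods. The common analytic ingredient, which I would establish first and on which the forward implication rests, is a growth estimate: in every vertical strip of finite width, $\varphi(s)$ and $\psi(s)$ are $O(|\Im(s)|^{A})$ for some $A$. This is immediate in the half-planes $\sigma>\sigma_a$ and $\sigma>\sigma_a^{*}$ of absolute convergence, and in the intervening strip it follows from \eqref{19} together with Stirling's formula and the Phragm\'en--Lindel\"of principle, applied to $\chi(s)$ after multiplication by a polynomial that cancels the finitely many poles in $\mathbf{S}$. Since $\Gamma(s)$ decays exponentially along vertical lines, this gives exponential decay of $\Gamma(s)\varphi(s)=(2\pi)^{s}\chi(s)$ and of $\Gamma(s)\psi(s)$ on such lines, uniformly in the strip. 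These bounds justify every interchange of summation and integration and every contour shift below, and they also force $b(n)\ll\mu_n^{A'}$, so that $\sum_{n}b(n)e^{-4\pi^{2}\mu_n/x}$ converges absolutely for $\Re(x)>0$.

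For $\eqref{19}\Rightarrow\eqref{modular}$: starting from the Cahen--Mellin integral $e^{-u}=\frac{1}{2\pi i}\int_{(c)}\Gamma(z)u^{-z}\,dz$ (valid for $c>0$, $\Re(u)>0$) with $u=\lambda_n x$ and summing against $a(n)$, I obtain, for $c>\sigma_a$,
\begin{equation*}
\sum_{n=1}^{\infty}a(n)e^{-\lambda_n x}=\frac{1}{2\pi i}\int_{(c)}\Gamma(z)\varphi(z)x^{-z}\,dz=\frac{1}{2\pi i}\int_{(c)}(2\pi)^{z}\chi(z)x^{-z}\,dz.
\end{equation*}
I then move the line of integration to $\Re(z)=-c_{1}$ for a large $c_{1}>0$, chosen so that the line lies to the left of every pole of the integrand and so that $\delta+c_{1}>\sigma_a^{*}$ (by \eqref{19} the factor $1/\Gamma(z)$ forces $\varphi$ to vanish at all sufficiently large negative integers, so only finitely many poles, those forming $P(x)$, are crossed). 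The exponential decay in $\Im(z)$ kills the horizontal segments, and the residues collected are exactly $P(x)$. On the new line I insert the functional equation as $(2\pi)^{z}\chi(z)=(2\pi)^{2z-\delta}\Gamma(\delta-z)\psi(\delta-z)$, substitute $w=\delta-z$, and collect the powers of $2\pi$ and of $x$; the integral becomes $\left(\frac{2\pi}{x}\right)^{\delta}\frac{1}{2\pi i}\int_{(\delta+c_{1})}\Gamma(w)\psi(w)\left(\frac{x}{4\pi^{2}}\right)^{w}\,dw$, which by the same Mellin pair with $u=4\pi^{2}\mu_n/x$ equals $\left(\frac{2\pi}{x}\right)^{\delta}\sum_{n=1}^{\infty}b(n)e^{-4\pi^{2}\mu_n/x}$. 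This is precisely \eqref{modular}.

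For the converse I would take Mellin transforms across \eqref{modular}, written as $\theta_a(x)-P(x)=\left(\frac{2\pi}{x}\right)^{\delta}\sum_{n}b(n)e^{-4\pi^{2}\mu_n/x}$ with $\theta_a(x):=\sum_{n}a(n)e^{-\lambda_n x}$. On the left, $\int_{0}^{\infty}\theta_a(x)x^{s-1}\,dx=\Gamma(s)\varphi(s)$ (term by term for $\Re(s)$ large, then by meromorphic continuation), while $\int_{0}^{\infty}P(x)x^{s-1}\,dx$ vanishes identically as a meromorphic function, since each residue term of $P(x)$ is a monomial $x^{-z_0}(\log x)^{k}$ whose Mellin transforms over $(0,1)$ and over $(1,\infty)$ are opposite rational functions of $s$. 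On the right, the substitution $x\mapsto 4\pi^{2}/x$ identifies the Mellin transform with $(2\pi)^{2s-\delta}\Gamma(\delta-s)\psi(\delta-s)$. Equating the two evaluations gives $\Gamma(s)\varphi(s)=(2\pi)^{2s-\delta}\Gamma(\delta-s)\psi(\delta-s)$, which is exactly \eqref{19}.

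The step I expect to be the main obstacle is the analytic bookkeeping underlying the growth estimates — establishing the polynomial-in-strip bound on $\varphi$ and $\psi$, hence the exponential decay of $\Gamma(s)\varphi(s)$ and $\Gamma(s)\psi(s)$ on vertical lines — since every interchange of summation and integration and every contour shift above rests on it; once this is secured, the remaining manipulations are routine. As the statement records, this equivalence (the forward half going back to Bochner \cite{bochner}) is \cite[p.~6, Lemmas~4 and~5]{cn1}, so in the paper itself one may simply invoke it, the above being the underlying mechanism.
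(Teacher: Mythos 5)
The paper gives no proof of this theorem --- it is quoted from Chandrasekharan--Narasimhan \cite[p.~6, Lemmas 4, 5]{cn1}, the modular-relation half being Bochner's \cite{bochner} --- and your Mellin-transform argument (Cahen--Mellin integral, contour shift across the finitely many poles of $\Gamma(z)\varphi(z)x^{-z}$ to collect $P(x)$, insertion of \eqref{19} with the substitution $w=\delta-z$ for the forward direction; splitting the Mellin transform at $x=1$ together with the vanishing of the continued transform of $P(x)$ for the converse) is precisely the classical proof found in those sources, and it is correct in outline. The one point worth flagging is that the Phragm\'en--Lindel\"of step requires an a priori finite-order bound on $\varphi$ and $\psi$ in the critical strip; this is part of Chandrasekharan and Narasimhan's standing hypotheses even though it is not restated in the present paper.
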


Recall that the ordinary Bessel function $J_{\nu}(z)$ is defined by \cite[p.~40]{watson}
\begin{equation*}
J_{\nu}(z):=\sum_{n=0}^{\infty}\df{(-1)^n\left(\frac12 z\right)^{\nu+2n}}{n!\Gamma(\nu+n+1)}, \quad z\in\mathbb{C}.
\end{equation*}

\begin{theorem}\label{thmriesz}   Let $x>0$ and $\rho>2\sigma_a^*-\delta-\frac12$.  Then the functional equation \eqref{18} is equivalent to the \emph{Riesz sum} identity
\begin{gather}
\df{1}{\Gamma(\rho+1)}{\sum_{\lambda_n\leq x}}^{\prime}a(n)(x-\lambda_n)^{\rho}=
\left(\df{1}{2\pi}\right)^{\rho}
\sum_{n=1}^{\infty}b(n)\left(\df{x}{\mu_n}\right)^{(\delta+\rho)/2}J_{\delta+\rho}(4\pi\sqrt{\mu_n x})+Q_{\rho}(x),\label{21}
\end{gather}
where the prime $\prime$ on the summation sign on the left side indicates that if $\rho=0$ and $x\in\{\lambda_n\}$, then only $\tf12 a(x)$ is counted.  Furthermore,
 $Q_{\rho}(x)$ is  defined by
\begin{equation}\label{22}
Q_{\rho}(x):=\df{1}{2\pi i}\int_{\mathcal{C}}\df{\chi(z)(2\pi)^zx^{z+\rho}}{\Gamma(\rho+1+z)}dz,
\end{equation}
where $\mathcal{C}$ is a curve or curves encircling  $\bf{S}$.
\end{theorem}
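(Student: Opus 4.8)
The plan is to prove the two implications separately; the substance is the passage from the functional equation \eqref{19} to the Riesz sum identity \eqref{21}, and the converse will follow by reversing the argument. The starting point is the Perron--Riesz representation of the left side: for $\rho\geq 0$ and $c>\max(0,\sigma_a)$,
\begin{equation*}
\df{1}{\Gamma(\rho+1)}{\sum_{\lambda_n\leq x}}^{\prime}a(n)(x-\lambda_n)^{\rho}=\df{1}{2\pi i}\int_{c-i\infty}^{c+i\infty}\varphi(s)\,\df{\Gamma(s)}{\Gamma(s+\rho+1)}\,x^{s+\rho}\,ds ,
\end{equation*}
obtained by integrating $\varphi(s)=\sum a(n)\lambda_n^{-s}$ term by term against the discontinuous integral $\frac{1}{2\pi i}\int_{(c)}\frac{\Gamma(s)}{\Gamma(s+\rho+1)}y^{s}\,ds$, which equals $\frac{1}{\Gamma(\rho+1)}(1-1/y)^{\rho}$ for $y>1$, equals $0$ for $0<y<1$, and equals $\tf12$ for $y=1$ when $\rho=0$ (this last case being precisely the prime on the summation). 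Writing $\varphi(s)\Gamma(s)=(2\pi)^{s}\chi(s)$ by \eqref{19}, the right side becomes $\frac{1}{2\pi i}\int_{(c)}\chi(s)(2\pi)^{s}x^{s+\rho}/\Gamma(s+\rho+1)\,ds$.

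Next I would move the line of integration to $\Re(s)=c'$, with $c'$ negative enough that $\delta-c'>\sigma_a^{*}$, so that $\psi(\delta-s)$ is represented by its Dirichlet series on the new line, but not so far left that the Mellin--Barnes integral below ceases to converge. Since $1/\Gamma(s+\rho+1)$ is entire, the only poles crossed are those in $\mathbf{S}$, and the sum of the residues is exactly $Q_{\rho}(x)$ of \eqref{22}. On the line $\Re(s)=c'$ I then insert \eqref{19} in the form $\chi(s)=(2\pi)^{s-\delta}\Gamma(\delta-s)\psi(\delta-s)$, expand $\psi(\delta-s)=\sum_{n}b(n)\mu_n^{s-\delta}$, and interchange summation and integration. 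After the substitution $w=\delta-s$ each resulting term is the Mellin--Barnes integral
\begin{equation*}
\df{1}{2\pi i}\int_{(\delta-c')}\df{\Gamma(w)}{\Gamma(\rho+\delta+1-w)}\,u^{-w}\,dw=u^{-(\rho+\delta)/2}\,J_{\rho+\delta}\big(2\sqrt{u}\,\big),\qquad u=(2\pi)^{2}\mu_n x ,
\end{equation*}
the standard Mellin transform pair for the Bessel function; collecting the elementary powers of $2\pi$, $\mu_n$ and $x$ turns the resulting series into $\left(\frac{1}{2\pi}\right)^{\rho}\sum_n b(n)\big(\frac{x}{\mu_n}\big)^{(\delta+\rho)/2}J_{\delta+\rho}(4\pi\sqrt{\mu_n x})$, which is \eqref{21}.

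The delicate part, and the one I expect to cost the most effort, is making these steps rigorous. The contour shift is justified by the standard Stirling bounds for the gamma factors together with the finite order of $\varphi$ in vertical strips (via Phragm\'en--Lindel\"of), which make the horizontal segments contribute nothing in the limit; and the interchange of summation and integration along $\Re(s)=c'$ is justified because $\sum_n|b(n)|\mu_n^{c'-\delta}$ converges for $\delta-c'>\sigma_a^{*}$ while the remaining integral converges absolutely. It then remains to see that the series produced in \eqref{21} is itself absolutely convergent: from $\sum_{\mu_n\leq T}|b(n)|\ll_{\varepsilon}T^{\sigma_a^{*}+\varepsilon}$ and the uniform estimate $J_{\nu}(y)\ll_{\nu}y^{-1/2}$, its $n$-th term is $\ll|b(n)|\,\mu_n^{-(\delta+\rho)/2-1/4}$, so the series converges precisely when $(\delta+\rho)/2+1/4>\sigma_a^{*}$, i.e.\ when $\rho>2\sigma_a^{*}-\delta-\tf12$, which is exactly the hypothesis of the theorem.

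For the converse I would reverse these steps. Taking Mellin transforms in $x$ of both sides of \eqref{21}, the left side produces $\Gamma(s)\varphi(s)/\Gamma(s+\rho+1)=(2\pi)^{s}\chi(s)/\Gamma(s+\rho+1)$, the Bessel series on the right produces (term by term, by the same Bessel Mellin transform) $(2\pi)^{2s-\delta}\Gamma(\delta-s)\psi(\delta-s)/\Gamma(s+\rho+1)$, and $Q_{\rho}(x)$ contributes a function holomorphic where the other two have their poles; equating the two expressions is \eqref{19}. Alternatively, one may pass from \eqref{21} back to the modular relation \eqref{modular} via the Laplace-transform identity $\sum_n a(n)e^{-\lambda_n y}=y^{\rho+1}\int_0^{\infty}e^{-xy}\big(\tf{1}{\Gamma(\rho+1)}\sum_{\lambda_n\leq x}a(n)(x-\lambda_n)^{\rho}\big)\,dx$ and then invoke Theorem \ref{thmmodular}.
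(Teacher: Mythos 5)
The paper does not prove this theorem: it is quoted verbatim from Chandrasekharan and Narasimhan \cite[Lemmas 4, 5]{cn1}, so your sketch is being compared against their classical argument, which it follows in outline (Perron--Riesz representation, contour shift past $\bf{S}$ producing $Q_{\rho}(x)$, insertion of the functional equation, and the Mellin--Barnes evaluation giving $J_{\delta+\rho}$). All of the formal computations you record are correct, including the normalization $(2\pi)^{-\rho}(x/\mu_n)^{(\delta+\rho)/2}J_{\delta+\rho}(4\pi\sqrt{\mu_n x})$ and the identification of the stated bound $\rho>2\sigma_a^*-\delta-\tfrac12$ with absolute convergence of the Bessel series.

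There is, however, a genuine gap exactly where you flag the ``delicate part.'' Your justification for interchanging $\sum_n$ and $\int_{(c')}$ on the shifted line requires two conditions on $c'$ simultaneously: $c'<\delta-\sigma_a^*$ so that $\sum_n|b(n)|\mu_n^{c'-\delta}$ converges, and (by Stirling, since $|\Gamma(\delta-s)/\Gamma(s+\rho+1)|\asymp|t|^{\delta-2c'-\rho-1}$ on $\Re(s)=c'$) $c'>(\delta-\rho)/2$ so that the remaining integral converges absolutely. These are compatible only when $\rho>2\sigma_a^*-\delta$, so the argument as written proves the identity only in that smaller range and misses the interval $2\sigma_a^*-\delta-\tfrac12<\rho\leq 2\sigma_a^*-\delta$ asserted by the theorem. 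Chandrasekharan and Narasimhan close this gap with a descent step your sketch omits: establish \eqref{21} first for $\rho$ large, then pass to smaller $\rho$ by differencing (equivalently, termwise differentiation of the Riesz sum in $x$), which is legitimate precisely because the Bessel series for the smaller $\rho$ converges uniformly on compact $x$-intervals --- the uniformity coming from partial summation together with the estimate $J_{\nu}(y)\ll y^{-1/2}$ that you already use. A similar caveat applies to the converse direction, where ``taking Mellin transforms of both sides'' needs growth control on the Riesz sum before the transform of the left side can be identified with $\Gamma(s)\varphi(s)/\Gamma(s+\rho+1)$; the alternative route you mention, passing through the modular relation \eqref{modular} via the Laplace transform, is the cleaner one and is essentially what \cite{cn1} do.
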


Chandrasekharan and Narasimhan \cite[p.~14, Theorem III]{cn1} show that the restriction $\rho>2\sigma_a^*-\delta-\frac12$ can be replaced by $\rho>2\sigma_a^*-\delta-\frac32$ under certain conditions. Because we later use analytic continuation, this extension is not important here.

Theorem \ref{thmmodular} is not explicitly used in the sequel.  However, Theorem \ref{thmriesz} is the key to our primary theorems, Theorem \ref{main} and Theorem \ref{maintheoremgeneral}.

Our examples include the following arithmetical functions: $r_k(n)$, the number of representations of $n$ as a sum of $k$ squares;  $\sigma_k(n)$, the sum of the $k$th powers of the divisors of $n$; Ramanujan's arithmetical function $\tau(n)$;  both odd and even primitive characters $\chi(n)$; and $F(n)$, the number of integral ideals of norm $n$ in an imaginary quadratic number field.

\section{Facts About Bessel Functions}

 The Bessel function of imaginary argument $I_{\nu}(z)$ is defined by \cite[p.~77]{watson}
\begin{equation}\label{b1}
I_{\nu}(z):=\sum_{n=0}^{\infty}\df{(\frac12 z)^{\nu+2n}}{n!\Gamma(\nu+n+1)},\quad z\in \mathbb{C},
\end{equation}
 while the modified Bessel function $K_{\nu}(z)$ is defined by \cite[p.~78]{watson}
\begin{align}\label{b2}
K_{\nu}(z)&:=\frac{\pi}{2}\df{I_{-\nu}(z)-I_{\nu}(z)}{\sin \nu\pi}, \quad z\in \mathbb{C}, \nu\notin\mathbb{Z},\\
K_n(z)&:=\lim_{\nu\to n}K_{\nu}(z), \qquad\qquad n\in\mathbb{Z}.\notag
\end{align}

As special cases \cite[p.~80]{watson},
\begin{align}
I_{1/2}\left(z\right)&=\sqrt{\frac{2}{\pi z}}\sinh  z,\label{I1}\\
K_{1/2}\left(z\right)&=\sqrt{\frac{\pi}{2 z}}e^{-z}.\label{I2}
\end{align}

 For $\nu\in\mathbb{C}$ \cite[p.~79]{watson},
\begin{equation}\label{KK}
K_{\nu}(z)=K_{-\nu}(z).
\end{equation}

For $\Re(\nu)>0$ \cite[p.~329]{bdkz},
\begin{equation}\label{limit}
\lim_{z\to 0}z^{\nu}K_{\nu}(z)=2^{\nu-1}\Gamma(\nu).
\end{equation}

The three foregoing Bessel functions satisfy the differentiation formulas \cite[pp.~66, 79]{watson}
\begin{align}
\df{d}{dz}\left(z^{\nu}J_{\nu}(z)\right)=&z^{\nu}J_{\nu-1}(z), \label{b3a}\\
\df{d}{dz}\left(z^{\nu}I_{\nu}(z)\right)=&z^{\nu}I_{\nu-1}(z), \label{b3}\\
\df{d}{dz}\left(z^{\nu}K_{\nu}(z)\right)=&-z^{\nu}K_{\nu-1}(z). \label{b4}
\end{align}

We shall need their asymptotic formulas as $z\to\infty$, namely \cite[pp.~199, 203, 202]{watson},
\begin{align}
J_{\nu}(z)=&\sqrt{\df{2}{\pi z}}\left(\cos(z-\tfrac12 \nu\pi-\tfrac14 \pi)+O\left(\df{1}{z}\right)\right),\label{b5a}\\
I_{\nu}(z)=&\sqrt{\df{1}{2\pi z}}e^z\left(1+O\left(\df{1}{z}\right)\right),\label{b5}\\
K_{\nu}(z)=&\sqrt{\df{\pi}{2 z}}e^{-z}\left(1+O\left(\df{1}{z}\right)\right).\label{b6}
\end{align}

\begin{lemma}\label{Watson1} \cite[p.~417]{watson} Let $a>0$, $\Re(\mu)>-1$, and $\nu\in\mathbb{C}$.  Then,
\begin{equation*}
\int_0^{\infty}\df{K_{\nu}(a\{t^2+z^2\})}{(t^2+z^2)^{\nu/2}}t^{2\mu+1}dt
=\df{2^{\mu}\Gamma(\mu+1)}{a^{\mu+1}z^{\nu-\mu-1}}K_{\nu-\mu-1}(az).
\end{equation*}
\end{lemma}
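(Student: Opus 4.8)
The plan is to reduce the integral to two elementary Laplace-type integrals by invoking the classical representation
\begin{equation*}
K_{\nu}(w)=\dfrac12\left(\dfrac{w}{2}\right)^{\nu}\int_{0}^{\infty}u^{-\nu-1}\exp\left(-u-\dfrac{w^{2}}{4u}\right)du,
\end{equation*}
which is valid for every $\nu\in\mathbb{C}$ whenever $\Re(w^{2})>0$ \cite{watson}; here we apply it with $w=a\sqrt{t^{2}+z^{2}}>0$. Substituting this into the left-hand side, the factor $(t^{2}+z^{2})^{\nu/2}$ produced by the representation cancels exactly against the $(t^{2}+z^{2})^{\nu/2}$ in the denominator, and the integrand becomes $\tfrac12(a/2)^{\nu}\,u^{-\nu-1}e^{-u}\exp\!\big(-a^{2}(t^{2}+z^{2})/(4u)\big)\,t^{2\mu+1}$. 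The purpose of this step is that the $t$-dependence and the $u$-dependence now appear as a product of Gaussian-type exponentials, so the double integral factors.

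Next I would interchange the $t$- and $u$-integrations and carry out the inner $t$-integral first. The substitution $v=a^{2}t^{2}/(4u)$ turns $\int_{0}^{\infty}t^{2\mu+1}e^{-a^{2}t^{2}/(4u)}\,dt$ into $\tfrac12(4u/a^{2})^{\mu+1}\Gamma(\mu+1)$, and this is precisely where the hypothesis $\Re(\mu)>-1$ is needed. What remains is a constant times $\int_{0}^{\infty}u^{\mu-\nu}\exp\!\big(-u-a^{2}z^{2}/(4u)\big)\,du$, which is again an instance of the representation above, read in the form $\int_{0}^{\infty}u^{s-1}e^{-u-\beta^{2}/u}\,du=2\beta^{s}K_{s}(2\beta)$ with $s=\mu-\nu+1$ and $\beta=az/2$; it equals $2(az/2)^{\mu-\nu+1}K_{\mu-\nu+1}(az)$. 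Collecting the powers of $2$, $a$, and $z$ yields the constant $2^{\mu}\Gamma(\mu+1)\,a^{-\mu-1}z^{\mu-\nu+1}$, and \eqref{KK} rewrites $K_{\mu-\nu+1}(az)$ as $K_{\nu-\mu-1}(az)$, producing the stated identity.

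The only point requiring genuine care is the legitimacy of the interchange of integrations, which I would justify by absolute convergence (Tonelli's theorem) of $\int_{0}^{\infty}\!\!\int_{0}^{\infty}u^{-\Re(\nu)-1}t^{2\Re(\mu)+1}e^{-u}\exp\!\big(-a^{2}(t^{2}+z^{2})/(4u)\big)\,du\,dt$: since $a>0$ and $z>0$, the factor $\exp(-a^{2}z^{2}/(4u))$ neutralizes the singularity at $u=0$ coming from $u^{-\Re(\nu)-1}$, the factor $e^{-u}$ controls $u\to\infty$, and the Gaussian in $t$ together with $\Re(\mu)>-1$ controls the $t$-integral at both $0$ and $\infty$. (Equivalently, the asymptotic formula \eqref{b6} shows that the original integrand is $O\big(t^{2\Re(\mu)}e^{-at}\big)$ as $t\to\infty$, so the left-hand side converges to begin with.) Once Fubini's theorem is in force, everything that remains is a chain of elementary substitutions and constant bookkeeping, so I expect no further obstacle; the substance of the argument is entirely in selecting the integral representation that decouples $t$ from the argument of $K_{\nu}$.
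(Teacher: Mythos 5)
Your proof is correct. Note first that the paper itself offers no proof of this lemma --- it is quoted verbatim (typo included: the argument $a\{t^2+z^2\}$ should read $a\sqrt{t^2+z^2}$, as you correctly interpret and as the applications in \eqref{watson10}--\eqref{watson11} confirm) from Watson's book, so there is no internal argument to compare against. Your route through the representation $K_{\nu}(w)=\tfrac12(w/2)^{\nu}\int_0^{\infty}u^{-\nu-1}e^{-u-w^2/(4u)}\,du$ is the natural self-contained one: the cancellation of $(t^2+z^2)^{\nu/2}$ is exactly the point of the lemma's normalization, the inner Gaussian integral gives $\tfrac12(4u/a^2)^{\mu+1}\Gamma(\mu+1)$ under precisely the hypothesis $\Re(\mu)>-1$, and the remaining $u$-integral is the same representation read backwards with index $\mu-\nu+1$; I checked the bookkeeping of the powers of $2$, $a$, $z$ and it lands on $2^{\mu}\Gamma(\mu+1)a^{-\mu-1}z^{\mu-\nu+1}K_{\nu-\mu-1}(az)$ as claimed, the final index flip being \eqref{KK}. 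The Tonelli justification is also sound, since $e^{-a^2z^2/(4u)}$ tames $u\to0$ and $e^{-u}$ tames $u\to\infty$ against the polynomial growth $u^{\Re(\mu)+1}$ of the inner integral (you do need $z>0$, which is implicit in the lemma and in all its uses). One remark worth recording: within the paper's own toolkit there is an even shorter derivation --- divide both sides of Lemma \ref{lemma1} by $b^{\mu}$ and let $b\to0^{+}$, using $J_{\mu}(bx)/b^{\mu}\to(x/2)^{\mu}/\Gamma(\mu+1)$; the right-hand side of Lemma \ref{lemma1} then tends exactly to $a^{-\mu-1}z^{-(\nu-\mu-1)}K_{\nu-\mu-1}(az)$, recovering the present lemma. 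Your argument has the advantage of being independent of Lemma \ref{lemma1}, at the cost of redoing the convergence analysis from scratch.
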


\begin{lemma}\label{lemma1} \cite[p.~416]{watson} For $a,b>0$, $\textup{Re}(z)>0, \textup{Re}(\mu)>-1$, and $\nu\in\mathbb{C}$,
\begin{equation*}
\int_0^{\infty}J_{\mu}(bx)\df{K_{\nu}(a\sqrt{z^2+x^2})}{(z^2+x^2)^{\nu/2}}x^{\mu+1}dx
=\df{b^{\mu}}{a^{\nu}}\left(\df{\sqrt{a^2+b^2}}{z}\right)^{\nu-\mu-1}K_{\nu-\mu-1}(z\sqrt{a^2+b^2}).
\end{equation*}
\end{lemma}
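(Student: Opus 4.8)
The plan is to collapse the integral into two iterated elementary integrals by inserting the exponential (``heat-kernel'') representation of the $K$-Bessel function. Two classical facts will be used. First, for $\Re(\alpha)>0$ and $\Re(\beta)>0$,
\begin{equation*}
\int_0^{\infty}t^{s-1}\exp\left(-\alpha t-\frac{\beta}{t}\right)\,dt=2\left(\frac{\beta}{\alpha}\right)^{s/2}K_s\left(2\sqrt{\alpha\beta}\right);
\end{equation*}
taking $s=-\nu$, $\alpha=1$, $\beta=\tfrac{1}{4}w^2$ and invoking \eqref{KK}, this specializes to
\begin{equation*}
\frac{K_{\nu}(w)}{w^{\nu}}=\frac{1}{2^{\nu+1}}\int_0^{\infty}t^{-\nu-1}\exp\left(-t-\frac{w^2}{4t}\right)\,dt,\qquad |\arg w|<\tfrac{\pi}{4}.
\end{equation*}
Second, the Weber-type Hankel transform of a Gaussian: for $\Re(\gamma)>0$ and $\Re(\mu)>-1$,
\begin{equation*}
\int_0^{\infty}J_{\mu}(bx)\,x^{\mu+1}e^{-\gamma x^2}\,dx=\frac{b^{\mu}}{(2\gamma)^{\mu+1}}\exp\left(-\frac{b^2}{4\gamma}\right),
\end{equation*}
which follows by expanding $J_{\mu}$ in its defining power series and integrating term by term.

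First I would prove Lemma \ref{lemma1} for $z>0$ real; the full range $\Re(z)>0$ then follows by analytic continuation in $z$, because for $\Re(z)>0$ the quantity $z^2+x^2$ stays off the negative real axis for every $x\geq 0$, and the left-hand side converges uniformly on compact subsets of $\{\Re(z)>0\}$ thanks to the exponential decay \eqref{b6} of $K_{\nu}$. The essential point of the representation above is that, with $w=a\sqrt{z^2+x^2}$, one has $w^2=a^2z^2+a^2x^2$, which separates additively:
\begin{equation*}
\frac{K_{\nu}(a\sqrt{z^2+x^2})}{(z^2+x^2)^{\nu/2}}=\frac{a^{\nu}}{2^{\nu+1}}\int_0^{\infty}t^{-\nu-1}\exp\left(-t-\frac{a^2z^2}{4t}\right)\exp\left(-\frac{a^2x^2}{4t}\right)\,dt.
\end{equation*}
Substituting this into the left side of Lemma \ref{lemma1} and interchanging the order of integration (justified in the last paragraph), the inner $x$-integral is exactly the Gaussian Hankel transform above with $\gamma=a^2/(4t)$, and equals $b^{\mu}2^{\mu+1}t^{\mu+1}a^{-2\mu-2}\exp(-b^2t/a^2)$.

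One is then left with a single integral in $t$ of the form $\int_0^{\infty}t^{\mu-\nu}\exp(-\alpha t-\beta/t)\,dt$, where now $\alpha=(a^2+b^2)/a^2$ and $\beta=a^2z^2/4$. Applying the first classical formula once more, this time with $s=\mu-\nu+1$, and using $K_{\mu-\nu+1}=K_{\nu-\mu-1}$ from \eqref{KK}, produces a modified Bessel function of argument $2\sqrt{\alpha\beta}=z\sqrt{a^2+b^2}$. It remains only to collect the accumulated constants: all powers of $2$ cancel, the powers of $a$ combine to $a^{-\nu}$, and the powers of $z$ and $\sqrt{a^2+b^2}$ assemble into $\left(\sqrt{a^2+b^2}/z\right)^{\nu-\mu-1}$, so the left side of Lemma \ref{lemma1} equals
\begin{equation*}
\frac{b^{\mu}}{a^{\nu}}\left(\frac{\sqrt{a^2+b^2}}{z}\right)^{\nu-\mu-1}K_{\nu-\mu-1}\left(z\sqrt{a^2+b^2}\right),
\end{equation*}
as desired. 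Analytic continuation in $z$ then finishes the proof.

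The one genuinely delicate point is the interchange of the two integrations, that is, the finiteness of
\begin{equation*}
\int_0^{\infty}\!\int_0^{\infty}\left|J_{\mu}(bx)\right|\,x^{\Re(\mu)+1}\,t^{-\Re(\nu)-1}\exp\left(-t-\frac{a^2(z^2+x^2)}{4t}\right)\,dt\,dx.
\end{equation*}
This follows by combining three observations: the factor $e^{-a^2z^2/(4t)}$ (with $z>0$) forces convergence of the $t$-integral at $t=0$, while $e^{-t}$ handles $t=\infty$; the estimate $|J_{\mu}(bx)|\ll x^{\Re(\mu)}$ as $x\to 0^{+}$ is integrable against $x^{\Re(\mu)+1}$ precisely because $\Re(\mu)>-1$; and the Gaussian factor controls the tail in $x$. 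Once Fubini's theorem is licensed, everything else is a matter of bookkeeping of exponents and two appeals to the classical formulas quoted at the outset.
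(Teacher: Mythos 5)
Your proposal is correct. The paper itself gives no proof of this lemma --- it is quoted directly from Watson's treatise --- and your argument (insert the exponential representation $K_{\nu}(w)/w^{\nu}=2^{-\nu-1}\int_0^{\infty}t^{-\nu-1}\exp(-t-w^2/(4t))\,dt$, exploit the additive splitting of $w^2=a^2z^2+a^2x^2$, evaluate the inner Weber--Gaussian Hankel transform, and reassemble via the same exponential formula together with $K_{-\nu}=K_{\nu}$) is essentially the classical derivation Watson gives in \S 13.47; the bookkeeping of the powers of $2$, $a$, $z$, and $\sqrt{a^2+b^2}$ checks out, and the Fubini justification for $z>0$ followed by analytic continuation in $z$ is sound.
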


\section{The First Primary Theorem}

\begin{theorem}\label{main} Let $\Re(\nu)>-1$, $\Re(c), \Re(r)>0$, and $\rho>-1$.   Then,
\begin{gather}
\df{1}{\Gamma(\rho+1)}\sum_{n=1}^{\infty}a(n)\int_{\lambda_n}^{\infty}(x-\lambda_n)^{\rho}(c^2+x)^{-\nu/2}K_{\nu}(4\pi r\sqrt{c^2+x})dx\notag\\
=\df{1}{(2\pi)^{\rho+1}r^{\nu}c^{\nu-\delta-\rho-1}}
\sum_{n=1}^{\infty}\df{b(n)}{(r^2+\mu_n)^{(\delta+\rho-\nu+1)/2}}K_{\delta+\rho+1-\nu}(4\pi c\sqrt{r^2+\mu_n})\notag\\
+\int_0^{\infty}Q_{\rho}(x)(c^2+x)^{-\nu/2}K_{\nu}(4\pi r\sqrt{c^2+x})dx,\label{1a}
\end{gather}
where it is assumed that the integral $Q_{\rho}(x)$, defined by \eqref{22}, converges absolutely.
\end{theorem}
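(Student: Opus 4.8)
The plan is to derive \eqref{1a} by integrating the Riesz sum identity \eqref{21} of Theorem~\ref{thmriesz} against the kernel $g(x):=(c^{2}+x)^{-\nu/2}K_{\nu}(4\pi r\sqrt{c^{2}+x})$ over $x\in(0,\infty)$. I would first restrict to $\rho>2\sigma_a^{*}-\delta-\tfrac12$, so that \eqref{21} is at my disposal, and note that by \eqref{b6} the kernel $g(x)$ decays exponentially as $x\to\infty$, whereas the left side of \eqref{21} is piecewise polynomial in $x$ and so of polynomial growth. Hence the integral over $(0,\infty)$ of the left side of \eqref{21} against $g(x)$ converges absolutely, and interchanging the locally finite sum with the integral---the constraint $\lambda_{n}\le x$ becoming the lower limit $\lambda_{n}$ of the inner integral---produces precisely the left side of \eqref{1a}. (The prime on the summation in \eqref{21} alters only a set of $x$ of measure zero and is irrelevant under the integral.)

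Turning to the right side of \eqref{21}, the contribution of $Q_{\rho}(x)$ is verbatim the last term of \eqref{1a}, whose absolute convergence is part of the hypotheses. For the Bessel series, after interchanging the sum over $n$ with the $x$-integral---again legitimate since $g$ decays exponentially, $J_{\delta+\rho}$ is bounded, and $b(n)$ grows at most polynomially---I am reduced to evaluating, for each $n$,
\begin{equation*}
\int_{0}^{\infty}x^{(\delta+\rho)/2}J_{\delta+\rho}\bigl(4\pi\sqrt{\mu_{n}x}\bigr)\,\frac{K_{\nu}\bigl(4\pi r\sqrt{c^{2}+x}\bigr)}{(c^{2}+x)^{\nu/2}}\,dx .
\end{equation*}
The substitution $x=t^{2}$ converts this to $2\int_{0}^{\infty}t^{\delta+\rho+1}J_{\delta+\rho}(4\pi\sqrt{\mu_{n}}\,t)(c^{2}+t^{2})^{-\nu/2}K_{\nu}(4\pi r\sqrt{c^{2}+t^{2}})\,dt$, which is exactly the integral evaluated in Lemma~\ref{lemma1} with $\mu=\delta+\rho$ (and $\Re(\delta+\rho)>-1$ because $\delta>0$, $\rho>-1$), $b=4\pi\sqrt{\mu_{n}}$, $a=4\pi r$, $z=c$. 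Applying Lemma~\ref{lemma1}, using $(4\pi r)^{2}+(4\pi\sqrt{\mu_{n}})^{2}=16\pi^{2}(r^{2}+\mu_{n})$, collecting the powers of $4\pi,\mu_{n},r,c$, and rewriting $K_{\nu-\delta-\rho-1}$ as $K_{\delta+\rho+1-\nu}$ via \eqref{KK}, I obtain the first term on the right of \eqref{1a}. This establishes \eqref{1a} for $\rho>2\sigma_a^{*}-\delta-\tfrac12$.

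Finally I would remove the restriction on $\rho$ by analytic continuation. For $\Re(\rho)>-1$ each inner integral $\int_{\lambda_{n}}^{\infty}(x-\lambda_{n})^{\rho}g(x)\,dx$ converges---the only possible difficulty, the singularity at $x=\lambda_{n}$, being integrable exactly when $\Re(\rho)>-1$---and is holomorphic in $\rho$; the exponential decay of the modified Bessel functions makes both $n$-series locally uniformly convergent in $\rho$, and the assumed absolute convergence of the $Q_{\rho}$-integral persists on a complex neighbourhood. Thus both sides of \eqref{1a} are holomorphic in $\rho$ on $\Re(\rho)>-1$, and since they agree on the half-line $\rho>2\sigma_a^{*}-\delta-\tfrac12$ they agree throughout $\Re(\rho)>-1$, in particular for all real $\rho>-1$.

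I expect the only genuinely delicate point to be the justification of the two interchanges of summation and integration; carrying them out first in the range of $\rho$ where \eqref{21} already holds---so that the left side of \eqref{1a} is plainly a convergent object coming from a true identity---and only then propagating the conclusion down to $\rho>-1$ by analyticity sidesteps this cleanly. The Bessel evaluation at the core of the argument is a direct citation of Lemma~\ref{lemma1} and requires no new work.
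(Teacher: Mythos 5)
Your proposal is correct and follows essentially the same route as the paper: integrate the Riesz sum identity \eqref{21} against the kernel $(c^{2}+x)^{-\nu/2}K_{\nu}(4\pi r\sqrt{c^{2}+x})$, evaluate the resulting Bessel integral via the substitution $x=t^{2}$ and Lemma \ref{lemma1} with $\mu=\delta+\rho$, $a=4\pi r$, $b=4\pi\sqrt{\mu_n}$, $z=c$, flip the index with \eqref{KK}, and remove the restriction on $\rho$ by analytic continuation. The only detail the paper adds that you gloss over is the final analytic continuation in $\nu$, $c$, $r$ from positive real values (needed because Lemma \ref{lemma1} requires $a,b>0$) to $\Re(\nu)>-1$, $\Re(c),\Re(r)>0$, justified by the decay in \eqref{b6}.
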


\begin{proof} Assume that $\rho>2\sigma_a^*-\delta-\frac12$.  Multiply both sides of \eqref{21} by
\begin{equation*}
(c^2+x)^{-\nu/2}K_{\nu}(4\pi r\sqrt{c^2+x}), \qquad c,r>0,
\end{equation*}
  and integrate over $0\leq x<\infty$.  Let $F_1(\delta,\rho,\nu)$ denote the left-hand side and let $F_2(\delta,\rho,\nu)$ and $F_3(\delta,\rho,\nu)$ denote, in order, the two terms on the right-hand side that we so obtain.

  First,
  \begin{align}\label{1}
  F_1(\delta,\rho,\nu)=&\df{1}{\Gamma(\rho+1)}\int_0^{\infty}{\sum_{\lambda_n\leq x}}^{\prime}a(n)(x-\lambda_n)^{\rho}(c^2+x)^{-\nu/2}K_{\nu}(4\pi r\sqrt{c^2+x})dx\notag\\
  =&\df{1}{\Gamma(\rho+1)}
  \sum_{n=1}^{\infty}a(n)\int_{\lambda_n}^{\infty}(x-\lambda_n)^{\rho}(c^2+x)^{-\nu/2}K_{\nu}(4\pi r\sqrt{c^2+x})dx.
  \end{align}

  Second, after we invert the order of summation and integration by absolute convergence on the right-hand side,
  we are led to the integral
\begin{align}\label{2}
I(\delta,\rho,\nu):=&\int_0^{\infty}x^{(\delta+\rho)/2}
(c^2+x)^{-\nu/2}J_{\delta+\rho}(4\pi\sqrt{\mu_n x})K_{\nu}(4\pi r\sqrt{c^2+x})dx\notag\\
=&2\int_0^{\infty}u^{\delta+\rho+1}J_{\delta+\rho}(4\pi u\sqrt{\mu_n})
\df{K_{\nu}(4\pi r\sqrt{c^2+u^2})}{(c^2+u^2)^{\nu/2}}du.
\end{align}
Apply Lemma \ref{lemma1} with
$$ \mu=\delta+\rho,\qquad a=4\pi r, \qquad \text{and} \qquad b=4\pi\sqrt{\mu_n}.$$
Hence, from \eqref{2}, for $\delta+\rho>-1$,
\begin{align}\label{4}
I(\delta,\rho,\nu)=&
2\df{(4\pi\sqrt{\mu_n})^{\delta+\rho}}{(4\pi r)^{\nu}}
\left(\df{\sqrt{(4\pi r)^2+(4\pi\sqrt{\mu_n})^2}}{c}\right)^{\nu-\delta-\rho-1}\notag\\
&\times K_{\nu-\delta-\rho-1}\left(c\sqrt{(4\pi r)^2+(4\pi\sqrt{\mu_n})^2}\right)\notag\\
=&\df{\mu_n^{(\delta+\rho)/2}}{2\pi r^{\nu}c^{\nu-\delta-\rho-1}}(r^2+\mu_n)^{(\nu-\delta-\rho-1)/2}
K_{\nu-\delta-\rho-1}(4\pi c\sqrt{r^2+\mu_n}).
\end{align}
In summary, with the use of \eqref{4} and \eqref{KK}, we have
\begin{align}\label{5}
F_2(\delta,\rho,\nu)=&\df{1}{(2\pi)^{\rho+1}r^{\nu}c^{\nu-\delta-\rho-1}}
\sum_{n=1}^{\infty}\df{b(n)}{(r^2+\mu_n)^{(\delta+\rho-\nu+1)/2}}K_{\nu-\delta-\rho-1}(4\pi c\sqrt{r^2+\mu_n})\notag\\
=&\df{1}{(2\pi)^{\rho+1}r^{\nu}c^{\nu-\delta-\rho-1}}
\sum_{n=1}^{\infty}\df{b(n)}{(r^2+\mu_n)^{(\delta+\rho-\nu+1)/2}}K_{\delta+\rho+1-\nu}(4\pi c\sqrt{r^2+\mu_n}).
\end{align}

Thirdly,
\begin{align}\label{6}
F_3(\delta,\rho,\nu)=&\int_0^{\infty}Q_{\rho}(x)(c^2+x)^{-\nu/2}K_{\nu}(4\pi r\sqrt{c^2+x})dx.
\end{align}

We now gather together \eqref{1}, \eqref{5}, and \eqref{6} to conclude \eqref{1a}, which we have proved for $\nu, r, c>0$.  However, in view of \eqref{b6}, we see that by analytic continuation, \eqref{1a} holds for
$\Re(\nu)>-1$, and $\Re(c), \Re(r)>0$.  The conditions $\rho>2\sigma_a^*-\delta-\frac12$ and $\delta+\rho>-1$ can be discarded by analytic continuation in $\rho$.
\end{proof}

\section{The Special Case $\rho=0$}
We consider Theorem \ref{main} in the special case $\rho=0$.

\begin{theorem}\label{maintheorem} Let $\Re(\nu)>-1$ and $\Re(c), \Re(r)>0$. Assume that the integral below converges absolutely.  Then,
\begin{gather*}
\df{1}{2\pi r}\sum_{n=1}^{\infty}\df{a(n)}{(c^2+\lambda_n)^{(\nu-1)/2}}
K_{\nu-1}(4\pi r\sqrt{c^2+\lambda_n})\notag\\
=\df{1}{2\pi r^{\nu}c^{\nu-\delta-1}}
\sum_{n=1}^{\infty}\df{b(n)}{(r^2+\mu_n)^{(\delta-\nu+1)/2}}K_{\delta+1-\nu}(4\pi c\sqrt{r^2+\mu_n})\notag\\
+\int_0^{\infty}Q_{0}(x)(c^2+x)^{-\nu/2}K_{\nu}(4\pi r\sqrt{c^2+x})dx.
\end{gather*}
\end{theorem}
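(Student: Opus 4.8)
The plan is to obtain Theorem \ref{maintheorem} as the $\rho=0$ instance of Theorem \ref{main}, the only genuine work being to put the inner integral on the left-hand side of \eqref{1a} into closed form. Setting $\rho=0$ is permissible since $0>-1$, and then $\Gamma(\rho+1)=1$, $(2\pi)^{\rho+1}=2\pi$, and $\delta+\rho+1-\nu=\delta+1-\nu$; hence the first series on the right of \eqref{1a} and the $Q_{\rho}$-integral collapse immediately onto the two corresponding terms in the statement of Theorem \ref{maintheorem}, and nothing there needs to be verified.

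The substantive step is to evaluate, for each $n\geq 1$,
$$\int_{\lambda_n}^{\infty}(c^2+x)^{-\nu/2}K_{\nu}\bigl(4\pi r\sqrt{c^2+x}\bigr)\,dx.$$
I would substitute $x=\lambda_n+t^2$, $dx=2t\,dt$, and write $z:=\sqrt{c^2+\lambda_n}$, so that $c^2+x=t^2+z^2$ and the integral becomes $2\int_0^{\infty}(t^2+z^2)^{-\nu/2}K_\nu(4\pi r\sqrt{t^2+z^2})\,t\,dt$. This is precisely the integral treated in Lemma \ref{Watson1} with $a=4\pi r$ and $\mu=0$ (so that $2\mu+1=1$, and the hypothesis $\Re(\mu)>-1$ holds), and the lemma evaluates it as
$$\df{2}{4\pi r}\cdot\df{K_{\nu-1}\bigl(4\pi r\sqrt{c^2+\lambda_n}\bigr)}{(c^2+\lambda_n)^{(\nu-1)/2}}=\df{1}{2\pi r}\cdot\df{K_{\nu-1}\bigl(4\pi r\sqrt{c^2+\lambda_n}\bigr)}{(c^2+\lambda_n)^{(\nu-1)/2}}.$$
Multiplying by $a(n)$, summing over $n$, and using $\Gamma(\rho+1)=1$ converts the left side of \eqref{1a} into the left side of Theorem \ref{maintheorem}; collecting the three pieces then completes the argument.

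I do not expect a real obstacle: this is a direct specialization of an already-established identity. The only points deserving a line of justification are that $\mu=0$ satisfies the hypothesis of Lemma \ref{Watson1}, that the parameter identifications in that lemma are carried out correctly, and that the series on the left of Theorem \ref{maintheorem} converges absolutely — the last following from the exponential decay of $K_{\nu-1}$ recorded in \eqref{b6} together with the polynomial control on $a(n)$ and $\lambda_n$ implicit in \eqref{18}. The absolute convergence of the $Q_0$-integral is hypothesized in the statement, exactly as in Theorem \ref{main}.
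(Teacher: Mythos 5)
Your proposal is correct and follows the same overall skeleton as the paper's proof: specialize Theorem \ref{main} to $\rho=0$ and reduce the inner integral on the left-hand side to a single modified Bessel function, the other two terms collapsing immediately. The one place where you diverge is the evaluation of $\int_{\lambda_n}^{\infty}(c^2+x)^{-\nu/2}K_{\nu}(4\pi r\sqrt{c^2+x})\,dx$. The paper substitutes $u=4\pi r\sqrt{c^2+x}$ and integrates the exact derivative $\frac{d}{du}\left(u^{1-\nu}K_{1-\nu}(u)\right)=-u^{1-\nu}K_{-\nu}(u)$ (that is, \eqref{b4} combined twice with $K_{\nu}=K_{-\nu}$ from \eqref{KK}), picking up only the boundary term at the lower limit; you instead substitute $x=\lambda_n+t^2$ and invoke the Sonine--Gegenbauer evaluation of Lemma \ref{Watson1} with $\mu=0$, $a=4\pi r$, $z=\sqrt{c^2+\lambda_n}$. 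Both yield $\df{1}{2\pi r}(c^2+\lambda_n)^{-(\nu-1)/2}K_{\nu-1}(4\pi r\sqrt{c^2+\lambda_n})$, and your route is entirely consistent with the paper's own later use of Lemma \ref{Watson1} with $\mu=0$ and $z=c$ in \eqref{watson10}. The paper's antiderivative argument is marginally more elementary, needing only the recurrence \eqref{b4} and the decay estimate \eqref{b6} rather than a tabulated integral, but nothing of substance separates the two, and your closing remarks on the hypotheses of the lemma and on absolute convergence are exactly the right points to flag.
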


\begin{proof} First, set
$$ u=4\pi r\sqrt{c^2+x} \quad \Rightarrow dx=\df{u}{8\pi^2r^2}du.$$
Hence, in turn, using \eqref{KK}, \eqref{b4}, and \eqref{KK}, we find that
\begin{align*}
\int_{\lambda_n}^{\infty}(c^2+x)^{-\nu/2}&K_{\nu}(4\pi r\sqrt{c^2+x})dx
=\int_{4\pi r\sqrt{c^2+\lambda_n}}^{\infty}\left(\df{u}{4\pi r}\right)^{-\nu}\left(\df{u}{8\pi^2r^2}\right)K_{\nu}(u)du\notag\\
=&\df{2}{(4\pi r)^{2-\nu}}\int_{4\pi r\sqrt{c^2+\lambda_n}}^{\infty}u^{-\nu+1}K_{\nu}(u)du\notag\\
=&2(4\pi r)^{\nu-2}\int_{4\pi r\sqrt{c^2+\lambda_n}}^{\infty}u^{-\nu+1}K_{-\nu}(u)du\notag\\
=&-2(4\pi r)^{\nu-2}\int_{4\pi r\sqrt{c^2+\lambda_n}}^{\infty}\df{d}{du}\left(u^{-\nu+1}K_{-\nu+1}(u)\right)du\notag\\
=&2(4\pi r)^{\nu-2}(4\pi r\sqrt{c^2+\lambda_n})^{-\nu+1}K_{-\nu+1}(4\pi r\sqrt{c^2+\lambda_n})\notag\\
=&\df{1}{2\pi r}(c^2+\lambda_n)^{-(\nu-1)/2}K_{\nu-1}(4\pi r\sqrt{c^2+\lambda_n}).
\end{align*}
Thus,  the sum on the left-hand side of \eqref{1a} reduces to
\begin{equation*}
\df{1}{2\pi r}\sum_{n=1}^{\infty}\df{a(n)}{(c^2+\lambda_n)^{(\nu-1)/2}}
K_{\nu-1}(4\pi r\sqrt{c^2+\lambda_n}).
\end{equation*}
The remaining part of the proof is immediate after setting $\rho=0$ in Theorem \ref{main}.
\end{proof}

Before giving examples in illustration of Theorem  \ref{maintheorem}, we offer remarks on previous work.   Theorem \ref{main} is new.    The first author's paper \cite[p.~342]{III} contains the first statement and proof of Theorem \ref{maintheorem} \cite[pp.~342--344]{III}. Our proof here is completely different from that in \cite{III}.   Theorem \ref{maintheorem} was also established via the Vorono\"{\i} summation formula in \cite[p.~154]{V}.  The special case, $\delta=1$, of Theorem \ref{maintheorem} was first established by F.~Oberhettinger and K.~Soni \cite[p.~24]{os} in 1972.

To illuminate the equivalence of the functional equation \eqref{19}, the modular relation \eqref{modular}, and the Riesz sum identity \eqref{21}, Chandrasekharan and Narasimhan \cite{cn1} examine the three identities with particular arithmetical functions. For more details about the functional equations associated with these arithmetical functions, and for calculations of $Q_0(x)$, see their paper \cite{cn1}.

In the examples below we refer to calculations made  by Chandrasekharan and Narasimhan \cite{cn1} to illustrate Theorem \ref{thmriesz}.  In particular, we use a few of their determinations of $Q_{\rho}(x)$.

\section{Example: $r_k(n)$}

 Let $r_k(n)$ denote the number of representations of the positive integer $n$ as a sum of $k$ squares, where  $k\geq2$.  Then
$$\zeta_k(s):=\sum_{n=1}^{\infty}\df{r_k(n)}{n^s},\qquad \sigma>k/2,$$
satisfies the functional equation
\begin{equation}\label{functionalequation}
 \pi^{-s}\Gamma(s)\zeta_k(s)=\pi^{s-k/2}\Gamma(k/2-s)\zeta_k(k/2-s).
 \end{equation}
In the notation of \eqref{19},
\begin{equation*}
a(n)=b(n)=r_k(n), \qquad \delta=\df{k}{2}, \qquad \text{and} \qquad \lambda_n=\mu_n=\df{n}{2}.
\end{equation*}
 From the functional equation \eqref{functionalequation}, $\zeta_k(0)=-1$, and $\zeta_k(s)$ has a simple pole at $s=2k$ with residue $\pi^{k/2}/\Gamma(k/2)$.
 It readily follows that
 \begin{equation}\label{Q}
Q_{0}(x)=-1+\df{(2\pi x)^{k/2}}{\Gamma(1+k/2)}.
\end{equation}

Appealing to Theorem \ref{maintheorem} and \eqref{Q}, we find that
\begin{align}\label{watson9}
&\df{1}{2\pi r}\sum_{n=1}^{\infty}\df{r_k(n)}{(c^2+n/2)^{(\nu-1)/2}}K_{\nu-1}(4\pi r\sqrt{c^2+n/2})\notag\\
=&\df{1}{2\pi r^{\nu}c^{\nu-k/2-1}}\sum_{n=1}^{\infty}\df{r_k(n)}{(r^2+n/2)^{(k/2-\nu+1)/2}}
K_{k/2+1-\nu}(4\pi c\sqrt{r^2+n/2})\notag\\
&+\int_0^{\infty}\left(-1+\df{(2\pi x)^{k/2}}{\Gamma(1+k/2)}\right)(c^2+x)^{-\nu/2}
K_{\nu}(4\pi r\sqrt{c^2+x})dx.
\end{align}

First, making the trivial change of variable $x=t^2$, and applying Lemma \ref{Watson1} with $ a=4\pi r,\,\,z=c,$ and $\mu=0$, we find that
\begin{equation}\label{watson10}
-\int_0^{\infty}(c^2+x)^{-\nu/2}
K_{\nu}(4\pi r\sqrt{c^2+x})dx=-\df{1}{2\pi rc^{\nu-1}}K_{\nu-1}(4\pi rc).
\end{equation}
Second, again making the trivial change of variable $x=t^2$, and applying Lemma \ref{Watson1} with $ a=4\pi r, z=c,$ and $\mu=\frac12 k$, we find that
\begin{equation}\label{watson11}
\int_0^{\infty}\df{(2\pi x)^{k/2}}{\Gamma(1+k/2)}(c^2+x)^{-\nu/2}
K_{\nu}(4\pi r\sqrt{c^2+x})dx
=\df{1}{2\pi  r^{k/2+1}c^{\nu-k/2-1}}K_{\nu-k/2-1}(4\pi rc).
\end{equation}
Now, put \eqref{watson10} and \eqref{watson11} into \eqref{watson9}. If we define $r_k(0)=1$ and use \eqref{limit}, we see that \eqref{watson10} can be written as the term for $n=0$ in the series on the left-hand side of \eqref{watson9}, while \eqref{watson11} can be considered as the term for $n=0$ in the series on the right-hand side.
Multiplying both sides of the resulting identity by $2\pi r$, and replacing $\nu$ by $\nu+1$, we conclude that
\begin{gather}
\sum_{n=0}^{\infty}\df{r_k(n)}{(c^2+n/2)^{\nu/2}}K_{\nu}(4\pi r\sqrt{c^2+n/2})\notag\\=
\df{1}{r^{\nu}c^{\nu-k/2}}\sum_{n=0}^{\infty}\df{r_k(n)}{(r^2+n/2)^{(k/2-\nu)/2}}K_{k/2-\nu}(4\pi c\sqrt{r^2+n/2}).
\label{watson4}
\end{gather}

The identity \eqref{watson4} was also established by the first author, Y.~Lee, and J.~Sohn \cite[p.~39, Equation (5.5)]{bls}.  For $k=2$, \eqref{watson4} was first proved by A.~L.~Dixon and W.~L.~Ferrar \cite[p.~53, Equation (4.13)]{dixonferrar} in 1934. A different proof for $k=2$ was given by Oberhettinger and Soni \cite[p.~24]{os}.

\section{Example: $\sigma_k(n)$}

Let $\sigma_k(n)$ denote the sum of the $k$th powers of the divisors of $n$, where it is assumed that $k$ is an odd positive integer.  The generating function for $\sigma_k(n)$ is given by
\begin{equation*}
\zeta_k(s):=\zeta(s)\zeta(s-k)=\sum_{n=1}^{\infty}\df{\sigma_k(n)}{n^s}, \qquad \sigma > k+1,
\end{equation*}
and it satisfies the functional equation
\begin{equation}\label{41a}
(2\pi)^{-s}\Gamma(s)\zeta_k(s)=(-1)^{(k+1)/2}(2\pi)^{-(k+1-s)}\Gamma(k+1-s)\zeta_k(k+1-s).
\end{equation}
In the notation of the  Dirichlet series and functional equation in \eqref{18} and \eqref{19}, respectively,
\begin{equation*}
   a(n)=\sigma_k(n), \quad b(n)=(-1)^{(k+1)/2}\sigma_k(n), \quad\lambda_n=\mu_n=n,\quad \delta=k+1.
   \end{equation*}

Now $Q_0(s)$ is the sum of the residues of
\begin{equation*}
R(z):=\df{\Gamma(z)\zeta(z)\zeta(z-k)x^{z}}{\Gamma(z+1)}.
\end{equation*}
(In Chandrasekaran and Narasimhan's paper \cite{cn1}, they utilize a different convention for Bernoulli numbers, and so our representation for $Q_0$ takes a different form from theirs.)
Observe that $R(z)$ has simple poles at $z=0,-1,k+1$. Using Euler's formula,
$$\zeta(2n)=(-1)^{n-1}\df{(2\pi)^{2n}B_{2n}}{2(2n)!}, \qquad n\geq 1.$$
where $n$ is a positive integer and $B_{n}$ denotes the $n$th Bernoulli number, we readily find that
\begin{equation}\label{45a}
Q_0(x)=\df{B_{k+1}}{2(k+1)}-\df{\delta_{1,k}x}{2}
+\df{(2\pi)^{k+1}(-1)^{(k-1)/2}B_{k+1}x^{k+1}}{2(k+1)\Gamma(k+2)},
\end{equation}
  where
  \begin{equation*}
  \delta_{1,k}=\begin{cases}
  1, \quad \text{ if } k=1,\\
  0, \quad \text{otherwise}.
  \end{cases}
  \end{equation*}

Applying Theorem \ref{maintheorem} and employing \eqref{45a}, we find that
  \begin{align}\label{watson12}
  &\df{1}{2\pi r}\sum_{n=1}^{\infty}\df{\sigma_k(n)}{(c^2+n)^{(\nu-1)/2}}K_{\nu-1}(4\pi r\sqrt{c^2+n})\notag\\
=&\df{1}{2\pi r^{\nu}c^{\nu-k-2}}\sum_{n=1}^{\infty}\df{(-1)^{(k+1)/2}\sigma_k(n)}{(r^2+n)^{(k+2-\nu)/2}}
K_{k+2-\nu}(4\pi c\sqrt{r^2+n})\notag\\
&+\int_0^{\infty}\left(\df{B_{k+1}}{2(k+1)}-\df{\delta_{1,k}x}{2}
+\df{(2\pi)^{k+1}(-1)^{(k-1)/2}B_{k+1}x^{k+1}}{2(k+1)\Gamma(k+2)}\right)(c^2+x)^{-\nu/2}
K_{\nu}(4\pi r\sqrt{c^2+x})dx.
\end{align}

Let $I_1, I_2$, and $I_3$ denote, respectively, the three integrals on the right side of \eqref{watson12}.  In each instance below, we initially make the change of variable $x=t^2$. First, by Lemma \ref{Watson1}, as in the calculation of \eqref{watson10},
\begin{equation}\label{watson13}
I_1=\df{B_{k+1}}{2(k+1)}\int_0^{\infty}(c^2+x)^{-\nu/2}K_{\nu}(4\pi r\sqrt{c^2+x})dx
=\df{B_{k+1}}{k+1}\,\df{1}{4\pi rc^{\nu-1}}K_{\nu-1}(4\pi rc).
\end{equation}
Secondly, apply Lemma \ref{Watson1} with $a=4\pi r, z=c$, and $\mu=1$. Hence,
\begin{equation}\label{watson14}
I_2=-\df{\delta_{1,k}}{2}\int_0^{\infty}(c^2+x)^{-\nu/2}K_{\nu}(4\pi r\sqrt{c^2+x})x\,dx
=-\delta_{1,k}\df{2}{(4\pi r)^2c^{\nu-2}}K_{\nu-2}(4\pi rc).
\end{equation}
Thirdly, we apply Lemma \ref{Watson1} with $a=2\pi r, z=c$, and $\mu=k+1$. Therefore,
\begin{align}\label{watson15}
I_3=&\df{(2\pi)^{k+1}(-1)^{(k-1)/2}B_{k+1}}{2(k+1)\Gamma(k+2)}
\int_0^{\infty}(c^2+x)^{-\nu/2}K_{\nu}(4\pi r\sqrt{c^2+x})x^{k+1}\,dx\notag\\
=&\df{(-1)^{(k-1)/2}B_{k+1}}{4\pi(k+1)r^{k+2}c^{\nu-k-2}}K_{\nu-k-2}(4\pi rc).
\end{align}

In summary, putting \eqref{watson13}--\eqref{watson15} into \eqref{watson12}, we deduce that
\begin{align}\label{watson16}
&\df{1}{2\pi r}\sum_{n=1}^{\infty}\df{\sigma_k(n)}{(c^2+n)^{(\nu-1)/2}}K_{\nu-1}(4\pi r\sqrt{c^2+n})\notag\\
=&\df{1}{2\pi r^{\nu}c^{\nu-k-2}}\sum_{n=1}^{\infty}\df{(-1)^{(k+1)/2}\sigma_k(n)}{(r^2+n)^{(k+2-\nu)/2}}
K_{k+2-\nu}(4\pi c\sqrt{r^2+n})
+\df{B_{k+1}}{k+1}\,\df{1}{4\pi rc^{\nu-1}}K_{\nu-1}(4\pi rc)\notag\\
&-\delta_{1,k}\df{2}{(4\pi r)^2c^{\nu-2}}K_{\nu-2}(4\pi rc)
+\df{(-1)^{(k-1)/2}B_{k+1}}{4\pi(k+1)r^{k+2}c^{\nu-k-2}}K_{\nu-k-2}(4\pi rc).
\end{align}

We now put \eqref{watson16} in a more palatable form.  From \eqref{41a},
$$\zeta_k(0)=\zeta(0)\zeta(-k)=-\df12\cdot-\df{B_{k+1}}{k+1}=\df{B_{k+1}}{2(k+1)},$$
by \cite[p.~12]{edwards}.
Define
\begin{equation}\label{49cc}
\sigma_k(0)=-\zeta_k(0)=-\df{B_{k+1}}{2(k+1)}.
\end{equation}
Thus, by \eqref{49cc}, the first expression after the series on the right-hand side of \eqref{watson16} can be expressed as the term for $n=0$ in the series on the left-hand side.  Similarly, the last expression on the right-hand side of \eqref{watson16} can be represented as the term for $n=0$ in the series on the right-hand side of \eqref{watson16}.  Thus, we can write \eqref{watson16} in the more simplified form
\begin{gather}
\df{1}{2\pi r}\sum_{n=0}^{\infty}\df{\sigma_k(n)}{(c^2+n)^{(\nu-1)/2}}K_{\nu-1}(4\pi r\sqrt{c^2+n})\notag\\
=\df{1}{2\pi r^{\nu}c^{\nu-k-2}}\sum_{n=0}^{\infty}\df{(-1)^{(k+1)/2}\sigma_k(n)}{(r^2+n)^{(k+2-\nu)/2}}
K_{k+2-\nu}(4\pi c\sqrt{r^2+n})
-\delta_{1,k}\df{2}{(4\pi r)^2c^{\nu-2}}K_{\nu-2}(4\pi rc).\label{watson16+}
\end{gather}
 Multiplying both sides of \eqref{watson16+} by $2\pi r$, and replacing $\nu$ by $\nu+1$, we deduce that
\begin{gather}
\sum_{n=0}^{\infty}\df{\sigma_k(n)}{(c^2+n)^{\nu/2}}K_{\nu}(4\pi r\sqrt{c^2+n})\notag\\
=-\df{\delta_{1,k}}{4\pi rc^{\nu-1}}K_{\nu-1}(4\pi rc)
+\df{1}{r^{\nu}c^{\nu-k-1}}\sum_{n=0}^{\infty}\df{(-1)^{(k+1)/2}\sigma_k(n)}{(r^2+n)^{(k+1-\nu)/2}}\cdot
K_{k+1-\nu}(4\pi c\sqrt{r^2+n}).\label{watson5}
\end{gather}
This identity appears to be new.

In a 3-page fragment published with his lost notebook \cite[p.~253]{lnb}, Ramanujan offered a kindred formula to \eqref{watson5}.
If $\alpha$ and $\beta$ are positive numbers such that $\alpha\beta=\pi^2$, and if $s$ is any
complex number, then
\begin{align}\label{253}
&\sqrt{\alpha}\sum_{n=1}^{\infty}\sigma_{-s}(n)n^{s/2}K_{s/2}(2n\alpha)
-\sqrt{\beta}\sum_{n=1}^{\infty}\sigma_{-s}(n)n^{s/2}K_{s/2}(2n\beta)\notag\\
=&\df14\Gamma\left(\df{s}{2}\right)\zeta(s)\{\beta^{(1-s)/2}-\alpha^{(1-s)/2}\}
+\df14\Gamma\left(-\df{s}{2}\right)\zeta(-s)\{\beta^{(1+s)/2}-\alpha^{(1+s)/2}\}.
\end{align}
 Note that \eqref{253} is \emph{not} a special case of \eqref{watson5}, and also note that \eqref{253} is valid for \emph{all} complex $s$, while $k$ in \eqref{watson5} is an odd positive integer.

Unaware that \eqref{253} was first established by Ramanujan \cite[p.~253]{lnb}, A.~P.~Guinand gave the first proof in print in 1955 \cite{guinand}.    The identity \eqref{253} is now known as Guinand's formula or the Ramanujan--Guinand formula.  See also \cite[pp.~25--27]{bls} for a proof.
Letting $s=0$ in \eqref{253}, we obtain a well-known formula of Koshliakov \cite{bls}.

\section{Example: $\tau(n)$}

Recall that the Dirichlet series for Ramanujan's arithmetical function $\tau(n)$
\begin{equation}\label{50a}
f(s):=\sum_{n=1}^{\infty}\df{\tau(n)}{n^s}, \quad \sigma > \df{13}{2},
\end{equation}
satisfies the functional equation
\begin{equation}\label{50}
\chi(s):=(2\pi)^{-s}\Gamma(s)f(s)=(2\pi)^{-(12-s)}\Gamma(12-s)f(12-s).
\end{equation}
The function $\chi(s)$ is an entire function, and so $Q_0(x)\equiv0$.  Clearly,
\begin{equation}\label{exam4}
\lambda_n=\mu_n=n \qquad\delta=12.
\end{equation}
Applying Theorem \ref{maintheorem} and replacing $\nu$ by $\nu+1$, we deduce that, for $\Re(\nu), \Re(c), \Re(r) >0$,
\begin{equation}\label{watson3}
\sum_{n=1}^{\infty}\df{\tau(n)}{(c^2+n)^{\nu/2}}K_{\nu}(4\pi r\sqrt{c^2+n})=
\df{1}{r^{\nu}c^{\nu-12}}\sum_{n=1}^{\infty}\df{\tau(n)}{(r^2+n)^{(12-\nu)/2}}K_{12-\nu}(4\pi c\sqrt{r^2+n}).
\end{equation}

The identity \eqref{watson3} was first established by the first author, Lee, and Sohn \cite[p.~40, Equation (5.7)]{bls}.

\section{Example: Primitive characters $\chi(n)$}
Let $\chi$ denote a primitive character modulo $q$.  Because the functional equations for the Dirichlet $L$-series
$$L(s,\chi)=\sum_{n=1}^{\infty}\df{\chi(n)}{n^s}, \qquad \sigma >0,$$
are different for $\chi$ even and $\chi$ odd, we separate the two cases.

Suppose first that $\chi$ is odd.  Then the functional equation for $L(s,\chi)$ is given by \cite[p.~71]{davenport}
\begin{equation}\label{funcequa1}
\chi(s):=\left(\df{\pi}{q}\right)^{-s}\Gamma(s)L(2s-1,\chi)=
-\df{i\tau(\chi)}{\sqrt{q}}\left(\df{\pi}{q}\right)^{-(\tf32-s)}\Gamma\left(\tf32-s\right)L(2-2s,\overline{\chi}),
\end{equation}
where $\overline{\chi}(n)$ denotes the complex conjugate of $\chi(n)$, and $\tau(\chi)$ denotes the Gauss sum
\begin{equation}\label{gauss}
\tau(\chi):=\sum_{n=1}^{q}\chi(n)e^{2\pi in/q}.
\end{equation}
Hence, in the notation of \eqref{18} and \eqref{19},
\begin{equation}\label{exam5}
a(n)=n\chi(n),\quad b(n)=-\df{i\tau(\chi)}{\sqrt{q}}n\overline{\chi}(n), \quad \lambda_n=\mu_n=\df{n^2}{2q},
\quad \delta=\df32.
\end{equation}
 Also, $\chi(s)$ is an entire function, and consequently $Q_0(x)\equiv0$.
 Applying Theorem \ref{maintheorem}, multiplying both sides of the resulting identity by $2\pi r$, and replacing $\nu$ by $\nu+1$,  we conclude that
 \begin{gather*}
\sum_{n=0}^{\infty}\df{n\chi(n)}{(c^2+n^2/(2q))^{\nu/2}}K_{\nu}(4\pi r\sqrt{c^2+n^2/(2q)})\notag\\=-
\df{i\tau(\chi)}{r^{\nu}c^{\nu-3/2}\sqrt{q}}
\sum_{n=0}^{\infty}\df{n\,\overline{\chi}(n)}{(r^2+n^2/(2q))^{(3/2-\nu)/2}}K_{3/2-\nu}(4\pi c\sqrt{r^2+n^2/(2q)}),
\end{gather*}
which is new.

Second, let $\chi$ be even.  Then the functional equation of $L(s,\chi)$ is given by \cite[p.~69]{davenport}
 \begin{equation}\label{funcequa2}
\chi(s):=\left(\df{\pi}{q}\right)^{-s}\Gamma(s)L(2s,\chi)=
\df{\tau(\chi)}{\sqrt{q}}\left(\df{\pi}{q}\right)^{-(\tf12-s)}\Gamma\left(\tf12-s\right)L(1-2s,\overline{\chi}).
\end{equation}
Hence, by \eqref{18} and \eqref{19},
\begin{equation}\label{exam6}
a(n)=\chi(n),\quad b(n)=\df{\tau(\chi)}{\sqrt{q}}\overline{\chi}(n), \quad \lambda_n=\mu_n=\df{n^2}{2q},
\quad \delta=\df12.
\end{equation}
Also, $\chi(s)$ is an entire function, and consequently $Q_0(x)\equiv0$.
Appealing to Theorem \ref{maintheorem},  multiplying both sides of the identity so obtained by $2\pi r$, and replacing $\nu$ by $\nu+1$, we conclude that
\begin{gather*}
\sum_{n=0}^{\infty}\df{\chi(n)}{(c^2+n^2/(2q))^{\nu/2}}K_{\nu}(4\pi r\sqrt{c^2+n^2/(2q)})\notag\\=
\df{\tau(\chi)}{r^{\nu}c^{\nu-1/2}\sqrt{q}}
\sum_{n=0}^{\infty}\df{\overline{\chi}(n)}{(r^2+n^2/(2q))^{(1/2-\nu)/2}}K_{1/2-\nu}(4\pi c\sqrt{r^2+n^2/(2q)}),
\end{gather*}
which is also a new identity.

\section{Example: Ideal Functions $F(n)$ of Imaginary Quadratic Number Fields}

Let $F(n)$ denote the number of integral ideals of norm $n$ in an imaginary quadratic number  field $K=\mathbb{Q}\left(\sqrt{-D}\right)$, where $D$ is the discriminant of $K$.  Then the Dedekind zeta function
$$\zeta_{K}(s):=\sum_{n=1}^{\infty}\df{F(n)}{n^s}, \qquad \sigma>1,$$
satisfies the functional equation \cite[p.~211]{cohen}
\begin{equation}\label{dedekindk}
\left(\df{2\pi}{\sqrt{D}}\right)^{-s}\Gamma(s)\zeta_K(s)
=\left(\df{2\pi}{\sqrt{D}}\right)^{s-1}\Gamma(1-s)\zeta_K(1-s).
\end{equation}
We note from \eqref{18} and \eqref{19} that
\begin{equation*}
 a(n)=b(n)=F(n), \qquad \lambda_n=\mu_n=n/\sqrt{D}, \qquad \delta=1.
 \end{equation*}
The function $\zeta_K(s)$ has an analytic continuation to the entire complex plane where it is analytic except for a simple pole at $s=1$.  From \cite[p.~212]{cohen},
\begin{equation}\label{pole}
\lim_{s\to 1}(s-1)\zeta_K(s)=\df{2\pi h(K)R(K)}{w(K)\sqrt{D}},
\end{equation}
where $h(K), R(K)$, and $w(K)$ denote, respectively, the class number of $K$, the regulator of $K$, and the number of roots of unity in $K$.
Furthermore, from \eqref{dedekindk} and \eqref{pole},
\begin{equation}\label{zerovalue}
\zeta_K(0)=\lim_{s\to 0}\df{\sqrt{D}}{2\pi}\cdot\df{1}{s\Gamma(s)}\cdot s\zeta_K(1-s)
=\df{\sqrt{D}}{2\pi}\cdot-\df{2\pi h(K)R(K)}{w(K)\sqrt{D}}=-\df{h(K)R(K)}{w(K)}.
\end{equation}
For simplicity, set $d=\sqrt{D},  h=h(K), R=R(K)$, and $w=w(K)$.
From \eqref{zerovalue} and \eqref{pole},
\begin{align}
Q_0(x)=&\df{1}{2\pi i}\int_{\mathcal{C}}\df{\Gamma(z)}{\Gamma(z+1)}d^z
\zeta_{K}(z)x^zdz
=-\df{hR}{w}+\df{2\pi hRx}{w}.\label{70}
\end{align}

 By Theorem \ref{maintheorem} and \eqref{70},
\begin{align}\label{watson18}
&\df{1}{2\pi r}\sum_{n=1}^{\infty}\df{F(n)}{(c^2+n/d)^{(\nu-1)/2}}K_{\nu-1}(4\pi r\sqrt{c^2+n/d})\notag\\
=&\df{1}{2\pi r^{\nu}c^{\nu-2}}\sum_{n=1}^{\infty}\df{F(n)}{(r^2+n/d)^{(2-\nu)/2}}
K_{2-\nu}(4\pi c\sqrt{r^2+n/d})\notag\\
&+\int_0^{\infty}\left(-\df{hR}{w}+\df{2\pi hRx}{w}\right)(c^2+x)^{-\nu/2}K_{\nu}(4\pi r\sqrt{c^2+x})dx.
\end{align}
Let $I_1$ and $I_2$ denote, respectively, the two integrals on the right-hand side of \eqref{watson18}.  First, by Lemma \ref{Watson1}, as we did in our calculations in \eqref{watson10} and \eqref{watson13},
\begin{equation}\label{watson19}
I_1=-\df{hR}{w}\int_0^{\infty}(c^2+x)^{-\nu/2}K_{\nu}(4\pi r\sqrt{c^2+x})dx=
-\df{hR}{w}\df{1}{2\pi rc^{\nu-1}}K_{\nu-1}(4\pi rc).
\end{equation}
Second, by Lemma \ref{Watson1} with the same calculation as in \eqref{watson14},
\begin{equation}\label{watson20}
I_2=\df{2\pi hR}{w}\int_0^{\infty}x(c^2+x)^{-\nu/2}K_{\nu}(4\pi r\sqrt{c^2+x})dx
=\df{hR}{2w\,\pi r^2c^{\nu-2}}K_{\nu-2}(4\pi rc).
\end{equation}
Suppose that we define
\begin{equation}\label{hR}
F(0)=\df{hR}{w}.
\end{equation}
Then, substituting \eqref{watson19} and \eqref{watson20} into \eqref{watson18} and employing the definition \eqref{hR}  to identify \eqref{watson19} and \eqref{watson20} as the terms for $n=0$ on the left- and right-hand  sides below, we find that
\begin{align}\label{watson22}
&\df{1}{2\pi r}\sum_{n=0}^{\infty}\df{F(n)}{(c^2+n/d)^{(\nu-1)/2}}K_{\nu-1}(4\pi r\sqrt{c^2+n/d})\notag\\
=&\df{1}{2\pi r^{\nu}c^{\nu-2}}\sum_{n=0}^{\infty}\df{F(n)}{(r^2+n/d)^{(2-\nu)/2}}
K_{2-\nu}(4\pi c\sqrt{r^2+n/d}).
\end{align}
Lastly, multiplying both sides of \eqref{watson22} by $2\pi r$ and replacing $\nu$ by $\nu+1$, we conclude with the identity
\begin{equation}\label{F}
\sum_{n=0}^{\infty}\df{F(n)}{(c^2+n/d)^{\nu/2}}K_{\nu}(4\pi r\sqrt{c^2+n/d})=
\df{1}{r^{\nu}c^{\nu-1}}\sum_{n=0}^{\infty}\df{F(n)}{(r^2+n/d)^{(1-\nu)/2}}K_{1-\nu}(4\pi c\sqrt{r^2+n/d}).
\end{equation}

The identity \eqref{F} was first proved in 1934 by N.~S.~Koshliakov \cite[p.~555, Equation (15)]{kosh2}, who used the Abel-Plana summation formula.

\section{The Second Primary Theorem}

\begin{theorem}\label{maintheoremgeneral}
For $\Re(\nu)>-1$, $\rho>-1$, $\delta+\rho+\Re(\nu)+1>\delta_a^*>0$, and $\Re(\sqrt{\a})>\Re(\sqrt{\b})>0$,
\begin{align}
&\frac{1}{\Gamma(\rho+1)}\sum_{n=1}^{\infty}a(n)\int_{\l_n}^{\infty}(t-\l_n)^\rho\frac{d}{dt}\left\{I_{\nu+1}\left(\pi \sqrt{t}\left(\sqrt{\a}-\sqrt{\b}\right)\right)K_{\nu+1}\left(\pi \sqrt{t}\left(\sqrt{\a}+\sqrt{\b}\right)\right) \right\}dt\nonumber\\
&=-\frac{2}{(2\pi)^{\d+2\rho}}\frac{\Gamma(\nu+\delta+\rho+1)}{\Gamma(\nu+2)}
\sum_{n=1}^{\infty}\frac{b(n)}{\sqrt{4\mu_n+\a}\sqrt{4\mu_n+\b}}
\left(\frac{\sqrt{4\mu_n+\a}-\sqrt{4\mu_n+\b}}{\sqrt{4\mu_n+\a}+\sqrt{4\mu_n+\b}} \right)^{\nu+1}\nonumber\\
&\quad\times\left(\frac{1}{\sqrt{4\mu_n+\a}} +\frac{1}{\sqrt{4\mu_n+\b}}\right)^{2\delta+2\rho-2}
{}_2F_{1}\left[\nu-\delta-\rho+2,1-\delta-\rho;\nu+2;
\left(\frac{\sqrt{4\mu_n+\a}-\sqrt{4\mu_n+\b}}{\sqrt{4\mu_n+\a}+\sqrt{4\mu_n+\b}} \right)^{2} \right]\nonumber\\
&\quad-\frac{Q_{\rho}(0)}{2(\nu+1)}\left(\frac{\sqrt{\a}-\sqrt{\b}}{\sqrt{\a}+\sqrt{\b}} \right)^{\nu+1}-\int_{0}^{\infty}Q'_{\rho}(t)I_{\nu+1}\left(\pi \sqrt{t}\left(\sqrt{\a}-\sqrt{\b}\right)\right)K_{\nu+1}\left(\pi \sqrt{t}\left(\sqrt{\a}+\sqrt{\b}\right)\right)dt,\label{maintheoremequation}
\end{align}
where it is assumed that $Q_{\rho}(0)$ exists, and where ${_2F_1}(a,b;c;z)$ denotes the ordinary hypergeometric function.
\end{theorem}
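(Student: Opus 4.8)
The plan is to derive \eqref{maintheoremequation} from the Riesz sum identity \eqref{21} in exactly the way Theorem~\ref{main} was derived: multiply both sides of \eqref{21} by the kernel $g(x):=\frac{d}{dx}\left\{I_{\nu+1}(A\sqrt{x})K_{\nu+1}(B\sqrt{x})\right\}$, where $A:=\pi(\sqrt{\alpha}-\sqrt{\beta})$ and $B:=\pi(\sqrt{\alpha}+\sqrt{\beta})$, and integrate over $0\le x<\infty$. In the first part of the argument I would work under the temporary restrictions $\rho>2\sigma_a^*-\delta-\tfrac12$ (so that \eqref{21} holds as stated), $\delta+\rho>0$, and $\alpha,\beta$ real with $\alpha>\beta>0$; these are removed at the end by analytic continuation. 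Write $F_1,F_2,F_3$ for the three pieces coming from the left side, the Bessel series, and the $Q_\rho$-term of \eqref{21}, and set $G(x):=I_{\nu+1}(A\sqrt{x})K_{\nu+1}(B\sqrt{x})$, so $g=G'$.

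The terms $F_1$ and $F_3$ are routine. For $F_1$, exactly as in the passage through \eqref{1} in the proof of Theorem~\ref{main}, one interchanges the absolutely convergent Dirichlet sum with the integral and uses that the starred Riesz sum is supported on $\lambda_n\le x$ to get $\Gamma(\rho+1)^{-1}\sum_{n\ge1}a(n)\int_{\lambda_n}^{\infty}(x-\lambda_n)^{\rho}g(x)\,dx$, which is the left side of \eqref{maintheoremequation}. For $F_3$, integrate by parts: $\int_0^{\infty}Q_\rho(x)g(x)\,dx=\left[Q_\rho(x)G(x)\right]_0^{\infty}-\int_0^{\infty}Q_\rho'(x)G(x)\,dx$. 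Since $A-B=-2\pi\sqrt{\beta}<0$, the asymptotics \eqref{b5} and \eqref{b6} give $G(x)=O\bigl(x^{-1/2}e^{-2\pi\sqrt{\beta x}}\bigr)$, so the contribution at $\infty$ vanishes against the at most polynomial growth of $Q_\rho$, while \eqref{b1} and \eqref{limit} give $G(0)=\frac{1}{2(\nu+1)}\bigl((\sqrt{\alpha}-\sqrt{\beta})/(\sqrt{\alpha}+\sqrt{\beta})\bigr)^{\nu+1}$. Hence $F_3$ is precisely the last two terms of \eqref{maintheoremequation}.

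The substance lies in $F_2$. Interchanging summation and integration (justified by absolute convergence together with the decay just noted), $F_2=(2\pi)^{-\rho}\sum_{n\ge1}b(n)\mu_n^{-(\delta+\rho)/2}\,\mathcal{I}_n$ with $\mathcal{I}_n=\int_0^{\infty}x^{(\delta+\rho)/2}J_{\delta+\rho}(4\pi\sqrt{\mu_n x})\,g(x)\,dx$. Substituting $x=t^2$ turns $\mathcal{I}_n$ into $\int_0^{\infty}t^{\delta+\rho}J_{\delta+\rho}(ct)\,\frac{d}{dt}\{I_{\nu+1}(At)K_{\nu+1}(Bt)\}\,dt$ with $c:=4\pi\sqrt{\mu_n}$; integrating by parts and using \eqref{b3a} in the form $\frac{d}{dt}\bigl(t^{\delta+\rho}J_{\delta+\rho}(ct)\bigr)=c\,t^{\delta+\rho}J_{\delta+\rho-1}(ct)$ — the boundary terms vanishing at $\infty$ by the exponential decay of $G$ and at $0$ because $t^{\delta+\rho}J_{\delta+\rho}(ct)=O(t^{2(\delta+\rho)})$ — reduces $\mathcal{I}_n$ to $-4\pi\sqrt{\mu_n}\int_0^{\infty}t^{\mu+1}J_{\mu}(ct)I_{\sigma}(At)K_{\sigma}(Bt)\,dt$ with $\mu=\delta+\rho-1$ and $\sigma=\nu+1$, a balanced integral of a product of three Bessel functions. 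This integral I would evaluate by expanding $J_{\mu}(ct)$ in its power series and integrating term by term with the classical $\,{}_2F_1$-evaluation of $\int_0^{\infty}t^{s-1}I_{\sigma}(At)K_{\sigma}(Bt)\,dt$ (valid since $B>A$), then resumming. The natural variables that emerge are $\sqrt{c^2+(A+B)^2}=2\pi\sqrt{4\mu_n+\alpha}$ and $\sqrt{c^2+(B-A)^2}=2\pi\sqrt{4\mu_n+\beta}$, because $A+B=2\pi\sqrt{\alpha}$, $B-A=2\pi\sqrt{\beta}$, and $c^2=16\pi^2\mu_n$. A hypergeometric transformation of quadratic type then brings the result to the $\,{}_2F_1[\nu-\delta-\rho+2,\,1-\delta-\rho;\,\nu+2;\,z_n^2]$ of the statement, where $z_n=(\sqrt{4\mu_n+\alpha}-\sqrt{4\mu_n+\beta})/(\sqrt{4\mu_n+\alpha}+\sqrt{4\mu_n+\beta})$, with the displayed powers of $\sqrt{4\mu_n+\alpha}$, $\sqrt{4\mu_n+\beta}$ and the constant $-\frac{2\,\Gamma(\nu+\delta+\rho+1)}{(2\pi)^{\delta+2\rho}\Gamma(\nu+2)}$ (the $\Gamma(\nu+2)$ coming from the series of $I_{\nu+1}$) falling out once the $(2\pi)$-powers are collected. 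Summing over $n$ produces the first term on the right of \eqref{maintheoremequation}; the series converges absolutely precisely when $\delta+\rho+\Re(\nu)+1>\delta_a^*$, since $z_n\to0$ and $\,{}_2F_1\to1$ as $\mu_n\to\infty$, so the $n$th summand is asymptotically a constant times $b(n)\mu_n^{-(\nu+\delta+\rho+1)}$.

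The main obstacle is this $F_2$ computation: carrying out the triple-Bessel integral and, above all, recognizing the a priori double hypergeometric series as the single $\,{}_2F_1$ in the stated normalization — identifying the correct quadratic transformation and tracking every constant and every power of $\sqrt{4\mu_n+\alpha}$, $\sqrt{4\mu_n+\beta}$, and $2\pi$. Everything else is bookkeeping. Once \eqref{maintheoremequation} is established under the temporary hypotheses, one drops $\rho>2\sigma_a^*-\delta-\tfrac12$ and $\delta+\rho>0$ by analytic continuation in $\rho$ (down to $\rho>-1$), and removes the reality restriction on $\alpha,\beta$ by analytic continuation in $(\alpha,\beta)$ throughout the region $\Re(\sqrt{\alpha})>\Re(\sqrt{\beta})>0$, exactly as in the final two sentences of the proof of Theorem~\ref{main}.
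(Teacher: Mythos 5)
Your proposal follows the paper's proof essentially step for step: multiply the Riesz sum identity \eqref{21} by $\frac{d}{dt}\{I_{\nu+1}(\cdot)K_{\nu+1}(\cdot)\}$, integrate over $(0,\infty)$, integrate by parts in both the Bessel-series term (via \eqref{b3a}) and the $Q_{\rho}$ term (producing the boundary value \eqref{b12} at $t=0$ and nothing at $\infty$ by \eqref{b5}--\eqref{b6}), thereby reducing the former to $\int_0^{\infty}t^{\mu+1}J_{\mu}(\xi t)I_{\nu+1}(At)K_{\nu+1}(Bt)\,dt$ with $\mu=\delta+\rho-1$, and finish by analytic continuation in $\rho$ and in $(\alpha,\beta)$. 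The only point of divergence is that the paper evaluates this last integral by quoting the known formula \eqref{bdkzintegral} from \cite{bdkz}, whereas you propose to re-derive it by termwise integration of the $J_{\mu}$ series followed by a quadratic transformation --- a legitimate but unnecessary detour, since the closed form you are aiming at is exactly the cited one.
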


\begin{proof} Replace $x$ by $t$ in \eqref{21},
multiply both sides of \eqref{21} by
\begin{align}
\mathbb{I}_{\nu+1}(\a,\b, t):=\frac{d}{dt}\left\{I_{\nu+1}\left(\pi \sqrt{t}\left(\sqrt{\a}-\sqrt{\b}\right)\right)K_{\nu+1}\left(\pi \sqrt{t}\left(\sqrt{\a}+\sqrt{\b}\right)\right) \right\},\label{b11}
\end{align}
 and finally integrate with respect to $t$ over $(0,\infty)$. We see that the left-hand side becomes
\begin{align}\label{withrho}
&\frac{1}{\Gamma(\rho+1)}\sum_{n=1}^{\infty}a(n)\int_{\l_n}^{\infty}(t-\l_n)^\rho\mathbb{I}_{\nu+1}(\a,\b, t)dt=\frac{1}{(2\pi)^{\rho}}F_1(\a,\b,\rho)+F_{2}(\a,\b,\rho),
\end{align}
where
\begin{align}
F_{1}(\a,\b,\rho)
&:=\sum_{n=1}^{\infty}b(n)\int_{0}^{\infty}\left(\df{t}{\mu_n}\right)^{(\delta+\rho)/2}
J_{\delta+\rho}\left(4\pi\sqrt{\mu_n t}\right)~\mathbb{I}_{\nu+1}(\a,\b, t) dt\notag\\
\intertext{and}
F_2(\a,\b,\rho)&:=\int_{0}^{\infty}Q_{\rho}(t)\mathbb{I}_{\nu+1}(\a,\b, t)dt.\label{b8}
\end{align}

First, examine $F_1(\alpha,\beta,\rho)$.  Integrating by parts while using \eqref{b3a} in the form
\begin{align*}
\frac{d}{dt}\left({t}^{(\delta+\rho)/2}J_{\delta+\rho}\left(a\sqrt{ t}\right)\right)=\frac{a}{2}{t}^{(\delta+\rho-1)/2}J_{\delta+\rho-1}\left(a\sqrt{ t}\right),
\end{align*}
we find that
\begin{align}
&\qquad F_1(\alpha,\beta,\rho)\notag\\
=&\sum_{n=1}^{\infty}\frac{b(n)}{\mu_n^{(\delta+\rho)/2}}\int_{0}^{\infty}{t}^{(\delta+\rho)/2}
J_{\delta+\rho}\left(4\pi\sqrt{\mu_n t}\right)~\frac{d}{dt}\left\{I_{\nu+1}\left(\pi \sqrt{t}\left(\sqrt{\a}-\sqrt{\b}\right)\right)K_{\nu+1}\left(\pi \sqrt{t}\left(\sqrt{\a}+\sqrt{\b}\right)\right) \right\} dt\nonumber\\
=&-\sum_{n=1}^{\infty}\frac{b(n)}{\mu_n^{(\delta+\rho)/2}}\int_{0}^{\infty}\frac{d}{dt}
\left({t}^{(\delta+\rho)/2}J_{\delta+\rho}\left(4\pi\sqrt{\mu_n t}\right)\right)I_{\nu+1}\left(\pi \sqrt{t}\left(\sqrt{\a}-\sqrt{\b}\right)\right)K_{\nu+1}\left(\pi \sqrt{t}\left(\sqrt{\a}+\sqrt{\b}\right)\right) dt\nonumber\\
=&-2\pi\sum_{n=1}^{\infty}\frac{b(n)}{\mu_n^{(\delta+\rho-1)/2}}\int_{0}^{\infty}t^{(\delta+\rho-1)/2}
J_{\delta+\rho-1}\left(4\pi\sqrt{\mu_n t}\right)I_{\nu+1}\left(\pi \sqrt{t}\left(\sqrt{\a}-\sqrt{\b}\right)\right)K_{\nu+1}\left(\pi \sqrt{t}\left(\sqrt{\a}+\sqrt{\b}\right)\right) dt,\label{b88}
\end{align}
where we have used the asymptotic formulas \eqref{b5a}--\eqref{b6}, the hypothesis $\delta+\rho>0$, and the existence of
\begin{equation*}
\lim_{t\to 0}I_{\nu+1}\left(\pi \sqrt{t}\left(\sqrt{\a}-\sqrt{\b}\right)\right)K_{\nu+1}\left(\pi \sqrt{t}\left(\sqrt{\a}+\sqrt{\b}\right)\right),
\end{equation*}
which is explicitly calculated in \eqref{b12} below. 

We next employ an integral evaluation from  \cite[p.~315]{bdkz}.  For $\Re(\mu)>-1$, $\Re(\mu+\nu)>-1$, and
$\Re(\pi(z+w))>|\Re(\pi(z-w))|+|\Im(\xi)|$,
\begin{align}\label{bdkzintegral}
&\int_0^{\infty}x^{\mu+1}J_{\mu}(\xi x)I_{\nu}(\pi(z-w)x)K_{\nu}(\pi(z+w)x)dx\notag\\
=&\df{\Gamma(\mu+\nu+1)}{\Gamma(\nu+1)}\df{(\xi/2)^{\mu}}{\sqrt{\xi^2+4\pi^2z^2}\sqrt{\xi^2+4\pi^2w^2}}
\left(\df{\sqrt{\xi^2+4\pi^2z^2}-\sqrt{\xi^2+4\pi^2w^2}}
{\sqrt{\xi^2+4\pi^2z^2}+\sqrt{\xi^2+4\pi^2w^2}}\right)^{\nu}\notag\\
&\times\left(\df{1}{\sqrt{\xi^2+4\pi^2z^2}}+\df{1}{\sqrt{\xi^2+4\pi^2w^2}}\right)^{2\mu}\notag\\
&\times {_2F_1}\left(\nu-\mu,-\mu;\nu+1;
 \left(\df{\sqrt{\xi^2+4\pi^2z^2}-\sqrt{\xi^2+4\pi^2w^2}}
 {\sqrt{\xi^2+4\pi^2z^2}+\sqrt{\xi^2+4\pi^2w^2}}\right)^2\right).
\end{align}
The Hankel inversion of the formula given above with the same kernel, that is, $J_{\mu}$, was given by Koshliakov \cite[Equation (1)]{kosh34} and is a generalization of an integral evaluation by V.~A.~Fock and V.~Bursian \cite[pp.~361--363]{fockbursian} arising in their study on electromagnetism (see also  \cite[Equations~(31), (33)]{fock}).

In the integral on the extreme right-hand side of \eqref{b88}, make the change of variable $t=x^2$ and then apply
\eqref{bdkzintegral} with $\xi=4\pi\sqrt{\mu_n}$, $\mu=\delta+\rho-1$, $z=\sqrt{\alpha}$, $w=\sqrt{\beta}$, and $\nu$ replaced by $\nu+1$.  Thus, for $\Re(\nu+\delta+\rho)>-1$ and $\delta+\rho>0$, we find that
\begin{align}\label{integralF1}
&F_1(\a,\b,\rho)=-2\pi\sum_{n=1}^{\infty}\frac{b(n)}{\mu_{n}^{(\delta+\rho-1)/2}}
\Bigg\{\frac{2\Gamma(\nu+\delta+\rho+1)}{\Gamma(\nu+2)}
\frac{(2\pi\sqrt{\mu_n})^{\delta+\rho-1}}{\sqrt{16\pi^2\mu_n+4\pi^2\a}\sqrt{16\pi^2\mu_n+4\pi^2\b}}\nonumber\\
&\quad\times\left(\frac{\sqrt{4\mu_n+\a}-\sqrt{4\mu_n+\b}}{\sqrt{4\mu_n+\a}+\sqrt{4\mu_n+\b}} \right)^{\nu+1}\left(\frac{1}{\sqrt{16\pi^2\mu_n+4\pi^2\a}} +\frac{1}{\sqrt{16\pi^2\mu_n+4\pi^2\b}}\right)^{2\delta+2\rho-2}\nonumber\\
&\quad\times{}_2F_{1}\left[\nu-\delta-\rho+2,-\delta-\rho+1,\nu+2,
\left(\frac{\sqrt{4\mu_n+\a}-\sqrt{4\mu_n+\b}}{\sqrt{4\mu_n+\a}+\sqrt{4\mu_n+\b}} \right)^{2} \right]\Bigg\}\nonumber\\
&=-\frac{2}{(2\pi)^{\d+\rho}}\frac{\Gamma(\nu+\delta+\rho+1)}
{\Gamma(\nu+2)}\sum_{n=1}^{\infty}\frac{b(n)}{\sqrt{4\mu_n+\a}
\sqrt{4\mu_n+\b}}\left(\frac{\sqrt{4\mu_n+\a}-\sqrt{4\mu_n+\b}}
{\sqrt{4\mu_n+\a}+\sqrt{4\mu_n+\b}} \right)^{\nu+1}\nonumber\\
&\quad\times\left(\frac{1}{\sqrt{4\mu_n+\a}} +\frac{1}{\sqrt{4\mu_n+\b}}\right)^{2\delta+2\rho-2}
{}_2F_{1}\left[\nu-\delta-\rho+2,-\delta-\rho+1;\nu+2;
\left(\frac{\sqrt{4\mu_n+\a}-\sqrt{4\mu_n+\b}}{\sqrt{4\mu_n+\a}+\sqrt{4\mu_n+\b}} \right)^{2} \right].
\end{align}

Second, from \eqref{b8} and \eqref{b11},
\begin{align}
F_2(\a,\b,\rho)
&=\int_{0}^{\infty}Q_{\rho}(t)\frac{d}{dt}\left\{I_{\nu+1}\left(\pi \sqrt{t}\left(\sqrt{\a}-\sqrt{\b}\right)\right)K_{\nu+1}\left(\pi \sqrt{t}\left(\sqrt{\a}+\sqrt{\b}\right)\right) \right\} dt.\label{integralF}
\end{align}
Now, by the definitions of $I_{\nu}$ and $K_{\nu}$ in \eqref{b1} and \eqref{b2}, respectively, and by the use of the functional equation and reflection formula for $\Gamma(z)$, we find that
\begin{align}\label{b12}
&\lim_{t\to 0}I_{\nu+1}\left(\pi \sqrt{t}\left(\sqrt{\a}-\sqrt{\b}\right)\right)K_{\nu+1}\left(\pi \sqrt{t}\left(\sqrt{\a}+\sqrt{\b}\right)\right)\notag\\
=&\lim_{t\to 0}\df{\pi}{2\sin(\pi (\nu+1))}\df{\left(\tfrac12\pi\sqrt{t}(\sqrt{\alpha}-\sqrt{\beta})\right)^{\nu+1}}{\Gamma(\nu+2)}
\df{\left(\tfrac12\pi\sqrt{t}(\sqrt{\alpha}+\sqrt{\beta})\right)^{-\nu-1}}{\Gamma(-\nu)}\notag\\
=&\df{\pi}{2\sin(\pi(\nu+1))(\nu+1)\Gamma(\nu+1)\Gamma(-\nu)}
\left(\df{\sqrt{\alpha}-\sqrt{\beta}}{\sqrt{\alpha}+\sqrt{\beta}}\right)^{\nu+1}\notag\\
=&\df{1}{2(\nu+1)}\left(\df{\sqrt{\alpha}-\sqrt{\beta}}{\sqrt{\alpha}+\sqrt{\beta}}\right)^{\nu+1}.
\end{align}
Utilizing \eqref{b12}, \eqref{b5}, and \eqref{b6} in performing an integration by parts in \eqref{integralF}, we deduce that, for $\Re(\nu)>-1$,
\begin{align}\label{integralF2}
F_2(\a,\b,\rho)=&-\frac{1}{2(\nu+1)}\left(\frac{\sqrt{\a}-\sqrt{\b}}{\sqrt{\a}+\sqrt{\b}} \right)^{\nu+1}Q_{\rho}(0)\notag\\&-\int_{0}^{\infty}Q'_{\rho}(t)I_{\nu+1}\left(\pi \sqrt{t}\left(\sqrt{\a}-\sqrt{\b}\right)\right)K_{\nu+1}\left(\pi \sqrt{t}\left(\sqrt{\a}+\sqrt{\b}\right)\right)dt,
\end{align}
where, for Re$(\sqrt{\a})>$ Re$(\sqrt{\b})$, the boundary term at $\infty$ vanishes, since by \eqref{b5} and \eqref{b6}, respectively, as $t\to\infty$,
\begin{equation}\label{icomu}
I_{\nu}(\pi(\sqrt{t\a}-\sqrt{t\b})\sim\frac{e^{\pi(\sqrt{t\a}-\sqrt{t\b})}}{\pi\sqrt{2(\sqrt{t\a}-\sqrt{t\b})}}
\end{equation}
and
\begin{align}\label{kcom}
K_{\nu}(\pi(\sqrt{t\a}+\sqrt{t\b}))\sim\frac{e^{-\pi(\sqrt{t\a}+\sqrt{t\b})}}{\sqrt{2(\sqrt{t\a}+\sqrt{t\b})}}.
\end{align}

Finally, from \eqref{withrho}, \eqref{integralF1}, and \eqref{integralF2}, we deduce that
\begin{align*}
&\frac{1}{\Gamma(\rho+1)}\sum_{n=1}^{\infty}a(n)\int_{\l_n}^{\infty}(t-\l_n)^\rho\frac{d}{dt}\left\{I_{\nu+1}\left(\pi \sqrt{t}\left(\sqrt{\a}-\sqrt{\b}\right)\right)K_{\nu+1}\left(\pi \sqrt{t}\left(\sqrt{\a}+\sqrt{\b}\right)\right) \right\}dt\nonumber\\
=&-\frac{1}{2(\nu+1)}\left(\frac{\sqrt{\a}-\sqrt{\b}}{\sqrt{\a}+\sqrt{\b}} \right)^{\nu+1}Q_{\rho}(0)\notag\\&-\frac{2}{(2\pi)^{\d+2\rho}}
\frac{\Gamma(\nu+\delta+\rho+1)}{\Gamma(\nu+2)}
\sum_{n=1}^{\infty}\frac{b(n)}{\sqrt{4\mu_n+\a}
\sqrt{4\mu_n+\b}}\left(\frac{\sqrt{4\mu_n+\a}-\sqrt{4\mu_n+\b}}{\sqrt{4\mu_n+\a}+\sqrt{4\mu_n+\b}} \right)^{\nu+1}\nonumber\\
&\times\left(\frac{1}{\sqrt{4\mu_n+\a}} +\frac{1}{\sqrt{4\mu_n+\b}}\right)^{2\delta+2\rho-2}
{}_2F_{1}\left[\nu-\delta-\rho+2,1-\delta-\rho;\nu+2;
\left(\frac{\sqrt{4\mu_n+\a}-\sqrt{4\mu_n+\b}}{\sqrt{4\mu_n+\a}+\sqrt{4\mu_n+\b}} \right)^{2} \right]\nonumber\\
&-\int_{0}^{\infty}Q'_{\rho}(t)I_{\nu+1}\left(\pi \sqrt{t}\left(\sqrt{\a}-\sqrt{\b}\right)\right)K_{\nu+1}\left(\pi \sqrt{t}\left(\sqrt{\a}+\sqrt{\b}\right)\right)dt.\notag
\end{align*}
The proof of Theorem \ref{maintheoremgeneral} is now complete.
\end{proof}

\section{The Special Case: $\rho=0$}
When $\rho=0$ in Theorem \ref{maintheoremgeneral}, by \eqref{icomu} and \eqref{kcom}, the left-hand side of \eqref{maintheoremequation} reduces to
\begin{align*}
&\sum_{n=1}^{\infty}a(n)\int_{\l_n}^{\infty}\frac{d}{dt}\left\{I_{\nu+1}\left(\pi\sqrt{t}
\left(\sqrt{\a}-\sqrt{\b}\right)\right)K_{\nu+1}\left(\pi\sqrt{t}\left(\sqrt{\a}+\sqrt{\b}\right)\right) \right\}dt\nonumber\\
&=-\sum_{n=1}^{\infty}a(n)I_{\nu+1}\left(\pi \sqrt{\l_n}\left(\sqrt{\a}-\sqrt{\b}\right)\right)K_{\nu+1}\left(\pi \sqrt{\l_n}\left(\sqrt{\a}+\sqrt{\b}\right)\right).
\end{align*}
Hence, we have our second main theorem.

\begin{theorem}\label{maintheorem2}
Assume that $\Re(\nu)>-1$ and $\Re(\sqrt{\alpha})>\Re(\sqrt{\beta})>0$.  Also assume that  $\delta+\Re(\nu)+1>\sigma_a^*>0$.  Suppose that the integral on the right side below converges absolutely and that $Q_0(0)$ exists.
Then,
\begin{align}
&\sum_{n=1}^{\infty}a(n)I_{\nu+1}\left(\pi \sqrt{\l_n}\left(\sqrt{\a}-\sqrt{\b}\right)\right)K_{\nu+1}\left(\pi \sqrt{\l_n}\left(\sqrt{\a}+\sqrt{\b}\right)\right)\nonumber\\
=&\frac{2(2\pi)^{-\delta}\Gamma(\nu+\delta+1)}
{\Gamma(\nu+2)}\sum_{n=1}^{\infty}\frac{b(n)}
{\sqrt{4\mu_n+\a}\sqrt{4\mu_n+\b}}
\left(\frac{\sqrt{4\mu_n+\a}-\sqrt{4\mu_n+\b}}{\sqrt{4\mu_n+\a}+\sqrt{4\mu_n+\b}} \right)^{\nu+1}\nonumber\\
&\times\left(\frac{1}{\sqrt{4\mu_n+\a}} +\frac{1}{\sqrt{4\mu_n+\b}}\right)^{2\delta-2}
{}_2F_{1}\left[\nu-\delta+2,1-\delta;\nu+2;
\left(\frac{\sqrt{4\mu_n+\a}-\sqrt{4\mu_n+\b}}{\sqrt{4\mu_n+\a}+\sqrt{4\mu_n+\b}} \right)^{2} \right]\nonumber\\
&+\frac{Q_{0}(0)}{2(\nu+1)}\left( \frac{\sqrt{\a}-\sqrt{\b}}{\sqrt{\a}+\sqrt{\b}}\right)^{\nu+1}+\int_{0}^{\infty}Q'_{0}(x)I_{\nu+1}\left(\pi \sqrt{x}\left(\sqrt{\a}-\sqrt{\b}\right)\right)K_{\nu+1}\left(\pi \sqrt{x}\left(\sqrt{\a}+\sqrt{\b}\right)\right)dx.\label{b13}
\end{align}
\end{theorem}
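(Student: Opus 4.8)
The plan is to obtain Theorem~\ref{maintheorem2} as the special case $\rho=0$ of Theorem~\ref{maintheoremgeneral}, so the entire substance of the argument is already contained in the proof of the latter; what remains is a substitution and an elementary simplification of the left-hand side. First I would set $\rho=0$ in the identity \eqref{maintheoremequation}. On the left-hand side this replaces $(t-\l_n)^{\rho}$ by $1$ and $\Gamma(\rho+1)$ by $\Gamma(1)=1$, so the integrand over $[\l_n,\infty)$ is the exact derivative $\frac{d}{dt}\bigl\{I_{\nu+1}(\pi\sqrt{t}(\sqrt{\a}-\sqrt{\b}))K_{\nu+1}(\pi\sqrt{t}(\sqrt{\a}+\sqrt{\b}))\bigr\}$. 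Applying the fundamental theorem of calculus, the contribution at the lower endpoint $t=\l_n$ is $-I_{\nu+1}(\pi\sqrt{\l_n}(\sqrt{\a}-\sqrt{\b}))K_{\nu+1}(\pi\sqrt{\l_n}(\sqrt{\a}+\sqrt{\b}))$, and the contribution at $t=\infty$ vanishes: by the asymptotic estimates \eqref{icomu} and \eqref{kcom}, the product $I_{\nu+1}K_{\nu+1}$ decays like $e^{-2\pi\sqrt{t}\sqrt{\b}}$ as $t\to\infty$, which tends to $0$ because $\Re(\sqrt{\b})>0$. Hence the left-hand side of \eqref{maintheoremequation} collapses to $-\sum_{n=1}^{\infty}a(n)I_{\nu+1}(\pi\sqrt{\l_n}(\sqrt{\a}-\sqrt{\b}))K_{\nu+1}(\pi\sqrt{\l_n}(\sqrt{\a}+\sqrt{\b}))$, exactly as recorded in the display preceding the statement.

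Next I would write out the right-hand side of \eqref{maintheoremequation} with $\rho=0$: since $(2\pi)^{\d+2\rho}=(2\pi)^{\d}$, the constant in front of the hypergeometric series becomes $-\frac{2(2\pi)^{-\delta}\Gamma(\nu+\delta+1)}{\Gamma(\nu+2)}$, the exponent $2\delta+2\rho-2$ becomes $2\delta-2$, and the ${}_2F_1$ parameters $\nu-\delta-\rho+2,\,1-\delta-\rho;\,\nu+2$ become $\nu-\delta+2,\,1-\delta;\,\nu+2$; the remaining two terms specialize to $-\frac{Q_0(0)}{2(\nu+1)}\bigl(\frac{\sqrt{\a}-\sqrt{\b}}{\sqrt{\a}+\sqrt{\b}}\bigr)^{\nu+1}$ and $-\int_0^{\infty}Q_0'(t)I_{\nu+1}(\pi\sqrt{t}(\sqrt{\a}-\sqrt{\b}))K_{\nu+1}(\pi\sqrt{t}(\sqrt{\a}+\sqrt{\b}))\,dt$. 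Equating this with the simplified left-hand side and multiplying the whole identity by $-1$ produces precisely \eqref{b13}.

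Finally, I would check that the hypotheses of Theorem~\ref{maintheorem2} cover those needed to invoke Theorem~\ref{maintheoremgeneral} at $\rho=0$: the conditions $\Re(\nu)>-1$ and $\Re(\sqrt{\a})>\Re(\sqrt{\b})>0$ are carried over verbatim, the constraint $\delta+\rho+\Re(\nu)+1>\sigma_a^*>0$ becomes $\delta+\Re(\nu)+1>\sigma_a^*>0$, which is assumed, the requirement $\rho>-1$ is then vacuous, and the hypotheses that $Q_0(0)$ exists and that the integral involving $Q_0'$ converges absolutely are imposed in the statement. Since every step is either a direct substitution or an elementary rearrangement, there is no genuine obstacle; the only point that requires any attention is the vanishing of the boundary term at infinity in the fundamental-theorem-of-calculus step, and that is exactly what the estimates \eqref{icomu}--\eqref{kcom} together with the hypothesis $\Re(\sqrt{\b})>0$ are designed to supply.
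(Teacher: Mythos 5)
Your proposal is correct and follows exactly the paper's own route: the paper likewise obtains Theorem \ref{maintheorem2} by setting $\rho=0$ in Theorem \ref{maintheoremgeneral}, collapsing the left-hand side via the fundamental theorem of calculus with the boundary term at infinity killed by \eqref{icomu}--\eqref{kcom} (the product decaying like $e^{-2\pi\sqrt{t}\sqrt{\b}}$), and then negating both sides. The bookkeeping of the $\rho=0$ specializations on the right-hand side and of the hypotheses is also as in the paper.
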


Next, we show that Theorem 4.1 from \cite{BDGZ1} can be obtained as a special case of Theorem \ref{maintheorem2}. To that end, divide both sides of \eqref{b13} by $(\sqrt{\a}-\sqrt{\b})^{\nu+1}$, and let $\a\to \b$. In the course of doing so, we need the limit
\begin{align}\label{b14}
\lim_{\a\to \b}\frac{I_{\nu+1}\left(\pi \sqrt{\l_n}\left(\sqrt{\a}-\sqrt{\b}\right)\right)K_{\nu+1}\left(\pi \sqrt{\l_n}\left(\sqrt{\a}+\sqrt{\b}\right)\right)}{(\sqrt{\a}-\sqrt{\b})^{\nu+1}}=\left(\frac{\pi}{2} \right)^{\nu+1}\lambda_n^{(\nu+1)/2}\frac{K_{\nu+1}(2\pi\sqrt{\l_n\b})}{\Gamma(\nu+2)},
\end{align}
where the definitions of $I_{\nu}$ and $K_{\nu}$ in \eqref{b1} and \eqref{b2}, respectively, were used.
On the left side of \eqref{b13}, by \eqref{b3} and \eqref{b4}, the series converges absolutely and uniformly with respect to $\alpha$ for $0\leq \sqrt{\alpha}<\epsilon$, for each fixed $\epsilon>0$. Thus, we can interchange summation and the limit as $\alpha\to\beta$ on the left-hand side of \eqref{b13} to find that
\begin{align}\label{b17}
&\lim_{\alpha\to\beta}\sum_{n=1}^{\infty}a(n)I_{\nu+1}\left(\pi \sqrt{\l_n}\left(\sqrt{\a}-\sqrt{\b}\right)\right)K_{\nu+1}\left(\pi \sqrt{\l_n}\left(\sqrt{\a}+\sqrt{\b}\right)\right)\notag\\=&
\frac{\left(\frac{\pi}{2} \right)^{\nu+1}}{\Gamma(\nu+2)}\sum_{n=1}^{\infty}a(n)\lambda_{n}^{(\nu+1)/2}K_{\nu+1}(2\pi\sqrt{\l_n\b}).
\end{align}
We also take the limit as $\alpha\to\beta$ inside the integral on the far right side of \eqref{b13} by using a similar argument with $\lambda_n$ replaced by $x$ in \eqref{b14}.  Hence,
\begin{align}\label{b15}
&\lim_{\alpha\to\beta}\int_{0}^{\infty}Q'_{0}(x)I_{\nu+1}\left(\pi \sqrt{x}\left(\sqrt{\a}-\sqrt{\b}\right)\right)K_{\nu+1}\left(\pi \sqrt{x}\left(\sqrt{\a}+\sqrt{\b}\right)\right)dx\notag\\
=&\df{\left(\frac{\pi}{2}\right)^{\nu+1}}{\Gamma(\nu+2)}\int_0^{\infty}Q_0^{\prime}(x)x^{(\nu+1)/2}
K_{\nu+1}(2\pi\sqrt{\beta x})dx.
\end{align}
Next, recall that ${_2F_1}(a,b;c;0)=1$.  Thus, it remains to evaluate the limit
\begin{align}\label{b16}
&\lim_{\alpha\to\beta}\frac{\left(\sqrt{4\mu_n+\a}+\sqrt{4\mu_n+\b}\right)^{-\nu-1}}
{\sqrt{4\mu_n+\a}\sqrt{4\mu_n+\b}}
\left(\frac{\sqrt{4\mu_n+\a}-\sqrt{4\mu_n+\b}}
{\sqrt{\alpha}-\sqrt{\beta}} \right)^{\nu+1}
\left(\frac{1}{\sqrt{4\mu_n+\a}} +\frac{1}{\sqrt{4\mu_n+\b}}\right)^{2\delta-2}\notag\\
&=\lim_{\alpha\to\beta}\frac{\left(\sqrt{4\mu_n+\a}+\sqrt{4\mu_n+\b}\right)^{-2\nu-2}}
{\sqrt{4\mu_n+\a}\sqrt{4\mu_n+\b}}(\sqrt{\a}+\sqrt{\b})^{\nu+1}\left(\frac{1}{\sqrt{4\mu_n+\a}} +\frac{1}{\sqrt{4\mu_n+\b}}\right)^{2\delta-2}\notag\\
&=\df{2^{2\delta-2}}{(4\mu_n+\beta)^{\delta}}\left(\df{\sqrt{\beta}}
{2(4\mu_n+\beta)}\right)^{\nu+1}.
\end{align}

Bringing together \eqref{b17}--\eqref{b16}, we conclude that
\begin{align}\label{b18}
&\frac{\left(\frac{\pi}{2} \right)^{\nu+1}}{\Gamma(\nu+2)}\sum_{n=1}^{\infty}a(n)\lambda_{n}^{(\nu+1)/2}K_{\nu+1}(2\pi\sqrt{\l_n\b})
=\frac{2^{\delta-\nu-2}\pi^{-\delta}\beta^{(\nu+1)/2}\Gamma(\nu+\delta+1)}
{\Gamma(\nu+2)}\sum_{n=1}^{\infty}\frac{b(n)}{(4\mu_n+\b)^{\delta+\nu+1}}\nonumber\\
&\qquad+\frac{\left(\frac{\pi}{2} \right)^{\nu+1}}{\Gamma(\nu+2)}\int_{0}^{\infty}Q_{0}^{\prime}(x)x^{(\nu+1)/2}K_{\nu+1}(2\pi\sqrt{\beta x})dx.
\end{align}
  Let $s=2\pi\sqrt{\beta}$.  Multiplying both sides of \eqref{b18} by $2/s$ and by $\left(\frac{\pi}{2} \right)^{-\nu-1}\Gamma(\nu+2)$ and then integrating by parts with the aid of \eqref{b4} and \eqref{b6}, we conclude that
\begin{align*}
\frac{2}{s}\sum_{n=1}^{\infty}a(n)\lambda_{n}^{(\nu+1)/2}K_{\nu+1}(s\sqrt{\l_n})
=&2^{3\delta+\nu+1}\pi^{\delta}s^\nu\Gamma(\nu+\delta+1)
\sum_{n=1}^{\infty}\frac{b(n)}{(16\pi^2\mu_n+s^2)^{\delta+\nu+1}}\nonumber\\
&+\df{2}{s}\int_{0}^{\infty}Q_{0}^{\prime}(x)x^{(\nu+1)/2}K_{\nu+1}(s\sqrt{x})dx\notag\\
=&2^{3\delta+\nu+1}\pi^{\delta}s^\nu\Gamma(\nu+\delta+1)
\sum_{n=1}^{\infty}\frac{b(n)}{(16\pi^2\mu_n+s^2)^{\delta+\nu+1}}\nonumber\\
&+\int_{0}^{\infty}Q_{0}(x)x^{\nu/2}K_{\nu}(s\sqrt{x})dx.
\end{align*}

We state this last result as a corollary.

\begin{corollary}\label{cor} For $\Re(\nu)>-1$, $\delta+\Re(\nu)+1>\sigma_a^*$, and $\Re(s)>0$,
\begin{align*}
\frac{2}{s}\sum_{n=1}^{\infty}a(n)\lambda_{n}^{(\nu+1)/2}K_{\nu+1}(s\sqrt{\l_n})
=&2^{3\delta+\nu+1}\pi^{\delta}s^\nu\Gamma(\nu+\delta+1)
\sum_{n=1}^{\infty}\frac{b(n)}{(16\pi^2\mu_n+s^2)^{\delta+\nu+1}}\nonumber\\
&+\int_{0}^{\infty}Q_{0}(x)x^{\nu/2}K_{\nu}(s\sqrt{x})dx,
\end{align*}
where it is assumed that the integral converges absolutely.
\end{corollary}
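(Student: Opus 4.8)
The plan is to obtain the corollary from Theorem \ref{maintheorem2} by letting $\alpha\to\beta$ in \eqref{b13}, after first dividing through by $(\sqrt{\alpha}-\sqrt{\beta})^{\nu+1}$, and then rescaling and integrating by parts.

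First I would divide both sides of \eqref{b13} by $(\sqrt{\alpha}-\sqrt{\beta})^{\nu+1}$ and pass to the limit $\alpha\to\beta$. On the left side the relevant term-by-term limit is \eqref{b14}; the interchange of this limit with the summation over $n$ is legitimate because, by \eqref{b3} and \eqref{b4}, the series in $a(n)$ converges absolutely and uniformly for $\sqrt{\alpha}$ in a neighbourhood of $\sqrt{\beta}$ — this is precisely the estimate used in \eqref{b17} — and the same estimate lets the limit be taken inside the $Q_0'$-integral with $\lambda_n$ replaced by the integration variable. On the right side I would use ${}_2F_1(a,b;c;0)=1$ together with the elementary limit \eqref{b16}; the hypothesis $\delta+\Re(\nu)+1>\sigma_a^*$ is exactly what makes the resulting $b(n)$-series, whose terms now decay like $\mu_n^{-(\delta+\nu+1)}$, converge absolutely. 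Assembling the three pieces produces \eqref{b18}, a $Q_0(0)$ boundary term being carried along at this stage.

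Next I would set $s=2\pi\sqrt{\beta}$, so that $16\pi^2\mu_n+s^2=4\pi^2(4\mu_n+\beta)$, multiply \eqref{b18} through by $\tfrac{2}{s}\bigl(\tfrac{\pi}{2}\bigr)^{-\nu-1}\Gamma(\nu+2)$, and gather the powers of $2$ and $\pi$; a short computation turns the $b(n)$-sum into $2^{3\delta+\nu+1}\pi^{\delta}s^{\nu}\Gamma(\nu+\delta+1)\sum_{n}b(n)(16\pi^2\mu_n+s^2)^{-(\delta+\nu+1)}$ and leaves the term $\tfrac{2}{s}\int_0^{\infty}Q_0'(x)x^{(\nu+1)/2}K_{\nu+1}(s\sqrt{x})\,dx$. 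Substituting $z=s\sqrt{x}$ and integrating by parts, using \eqref{b4} in the form $\tfrac{d}{dz}\bigl(z^{\nu+1}K_{\nu+1}(z)\bigr)=-z^{\nu+1}K_{\nu}(z)$, converts this term into $\int_0^{\infty}Q_0(x)x^{\nu/2}K_{\nu}(s\sqrt{x})\,dx$ up to boundary contributions: the one at $\infty$ vanishes by the exponential decay in \eqref{b6}, while the one at $x=0$, which is finite by \eqref{limit}, exactly cancels the $Q_0(0)$ term carried over from the previous paragraph. Collecting everything yields the asserted identity, the absolute convergence of the final integral being part of the hypotheses.

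The step I expect to be the main obstacle is the passage to the limit $\alpha\to\beta$: one must verify that both infinite series and the improper integral admit term-by-term limits, and in particular that the $b(n)$-series — which for generic admissible $\alpha\neq\beta$ need only converge thanks to the oscillation/decay encoded in the hypergeometric factors — becomes genuinely absolutely convergent in the limit under the assumption $\delta+\Re(\nu)+1>\sigma_a^*$. Tracking the $Q_0(0)$ boundary contribution through the final integration by parts is a secondary technical point, harmless in the end because that contribution cancels.
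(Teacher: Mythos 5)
Your proposal is correct and follows essentially the same route as the paper: divide \eqref{b13} by $(\sqrt{\alpha}-\sqrt{\beta})^{\nu+1}$, let $\alpha\to\beta$ via \eqref{b14}--\eqref{b16}, set $s=2\pi\sqrt{\beta}$, rescale by $\tfrac{2}{s}(\pi/2)^{-\nu-1}\Gamma(\nu+2)$, and integrate by parts using \eqref{b4}. You are in fact slightly more careful than the paper, which silently omits the $Q_0(0)$ term in \eqref{b18} and the matching boundary contribution at $x=0$ in the final integration by parts; as you observe, these two contributions cancel exactly.
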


Corollary \ref{cor}  was also established in \cite[Theorem 4.1]{BDGZ1}.

\section{Example: $r_k(n)$}
Recall \eqref{functionalequation}--\eqref{Q}. Applying Theorem \ref{maintheorem2} with $\a$ and $\b$ replaced by $2\a$ and $2\b$, respectively, and $\nu$ replaced by $\nu-1$, for $\Re(\nu)>0$, we find that
\begin{align}\label{rkn}
&\sum_{n=1}^{\infty}r_k(n)I_{\nu}\left(\pi \sqrt{n}\left(\sqrt{\a}-\sqrt{\b}\right)\right)K_{\nu}\left(\pi \sqrt{n}\left(\sqrt{\a}+\sqrt{\b}\right)\right)=\nonumber\\
=&\frac{\Gamma(k/2+\nu)}{\pi^{k/2}2^{k-1}\Gamma(\nu+1)}
\sum_{n=1}^{\infty}\frac{b(n)}{\sqrt{n+\a}\sqrt{n+\b}}
\left(\frac{\sqrt{n+\a}-\sqrt{n+\b}}{\sqrt{n+\a}+\sqrt{n+\b}} \right)^{\nu}\nonumber\\
&\times\left(\frac{1}{\sqrt{n+\a}} +\frac{1}{\sqrt{n+\b}}\right)^{k-2}{}_2F_{1}
\left[\nu-k/2+1,1-k/2;\nu+1;\left(\frac{\sqrt{n+\a}-\sqrt{n+\b}}{\sqrt{n+\a}+\sqrt{n+\b}} \right)^{2} \right]\nonumber\\
&-\frac{1}{2\nu}\left( \frac{\sqrt{\a}-\sqrt{\b}}{\sqrt{\a}+\sqrt{\b}}\right)^{\nu}+\int_{0}^{\infty}Q'_{0}(x)I_{\nu}\left(\pi \sqrt{2x}\left(\sqrt{\a}-\sqrt{\b}\right)\right)K_{\nu}\left(\pi \sqrt{2x}\left(\sqrt{\a}+\sqrt{\b}\right)\right)dx.
\end{align}
To evaluate this integral, we use an integral in \cite[p.~717, Equation 6.576, no.~5]{gr}, namely, for $a>b,$ $\Re(2\nu)>\l-1$, and $\Re(\l)<1,$
\begin{align}\label{gr}
\int_{0}^{\infty}x^{-\l}K_{\nu}(ax)I_{\nu}(bx)dx=\frac{b^{\nu}\Gamma\left(\frac{1-\l+2\nu}{2} \right)\Gamma\left(\frac{1-\l}{2} \right)}{2^{\l+1}\Gamma(\nu+1)a^{1-\l+\nu}}
{}_2F_1\left(\frac{1-\l+2\nu}{2},\frac{1-\l}{2};\nu+1;\frac{b^2}{a^2}\right).
\end{align}
Using  \eqref{Q} and \eqref{gr}, wherein we  make the change of variable $t=\sqrt{2x}$ and note that $\lambda=1-k$, $a=\pi(\sqrt{\alpha}+\sqrt{b})$, and $b=\pi(\sqrt{\alpha}-\sqrt{b})$, we deduce that
\begin{align}
&\int_{0}^{\infty}Q'_{0}(x)I_{\nu}\left(\pi \sqrt{2x}\left(\sqrt{\a}-\sqrt{\b}\right)\right)K_{\nu}\left(\pi \sqrt{2x}\left(\sqrt{\a}+\sqrt{\b}\right)\right)dx\nonumber\\
&=\frac{k(2\pi)^{k/2}}{2\Gamma(1+k/2)}\int_{0}^{\infty}x^{k/2-1}I_{\nu}\left(\pi \sqrt{2x}\left(\sqrt{\a}-\sqrt{\b}\right)\right)K_{\nu}\left(\pi \sqrt{2x}\left(\sqrt{\a}+\sqrt{\b}\right)\right)dx\nonumber\\
&=\frac{k2^{k/2-1}\pi^{k/2}}{2^{k/2-1}\Gamma(1+k/2)}\int_{0}^{\infty}t^{k-1}I_{\nu}\left(\pi t\left(\sqrt{\a}-\sqrt{\b}\right)\right)K_{\nu}\left(\pi t\left(\sqrt{\a}+\sqrt{\b}\right)\right)dt\nonumber\\
&=\frac{k\pi^{k/2}}{\Gamma(1+k/2)}
\frac{\left(\pi\left(\sqrt{\a}-\sqrt{\b}\right)\right)^{\nu}\Gamma\left(\frac{k}{2}+\nu \right)\Gamma\left(\frac{k}{2} \right)}{2^{2-k}\Gamma(\nu+1)\left(\pi \left(\sqrt{\a}+\sqrt{\b}\right)\right)^{\nu+k}}\cdot
{}_2F_1\left(\frac{k}{2}+\nu,\frac{k}{2};\nu+1;\left(\frac{\sqrt{\a}-\sqrt{\b}}{\sqrt{\a}+\sqrt{\b}} \right)^{2} \right)\nonumber\\
&=\frac{2^{k-1}\Gamma\left(\frac{k}{2}+\nu \right)}{\pi^{k/2}\Gamma(\nu+1)}\left(\frac{\sqrt{\a}-\sqrt{\b}}{\sqrt{\a}+\sqrt{\b}} \right)^{\nu}\left(\frac{1}{\sqrt{\a}+\sqrt{\b}} \right)^{k}{}_2F_1\left(\frac{k}{2}+\nu,\frac{k}{2};\nu+1;\left(\frac{\sqrt{\a}-\sqrt{\b}}{\sqrt{\a}+\sqrt{\b}} \right)^{2} \right).\label{ex1b}
\end{align}
Invoking Euler's formula \cite[p.~68, Theorem 2.2.5]{aar},
\begin{align*}
{}_2F_1\left(a,b;c;x \right)=(1-x)^{c-a-b}{}_2F_1\left(c-a,c-b;c;x \right).
\end{align*}
in \eqref{ex1b}, we find that
\begin{align}
&\int_{0}^{\infty}Q'_{0}(x)I_{\nu}\left(\pi \sqrt{2x}\left(\sqrt{\a}-\sqrt{\b}\right)\right)K_{\nu}\left(\pi \sqrt{2x}\left(\sqrt{\a}+\sqrt{\b}\right)\right)dx\nonumber\\
&=\frac{2^{k-1}\Gamma\left(\frac{k}{2}+\nu \right)}{\pi^{k/2}\Gamma(\nu+1)}\left(\frac{\sqrt{\a}-\sqrt{\b}}{\sqrt{\a}+\sqrt{\b}} \right)^{\nu}\left(\frac{1}{\sqrt{\a}+\sqrt{\b}} \right)^{k}{}_2F_1\left(\frac{k}{2}+\nu,\frac{k}{2};\nu+1;\left(\frac{\sqrt{\a}-\sqrt{\b}}{\sqrt{\a}+\sqrt{\b}} \right)^{2} \right)\nonumber\\
&=\frac{2^{1-k}\Gamma\left(\frac{k}{2}+\nu \right)(\sqrt{\a\b})^{1-k}}{\pi^{k/2}\Gamma(\nu+1)}\left(\frac{\sqrt{\a}-\sqrt{\b}}{\sqrt{\a}+\sqrt{\b}} \right)^{\nu}\left(\frac{1}{\sqrt{\a}+\sqrt{\b}} \right)^{2-k}{}_2F_1\left(1-\frac{k}{2}+\nu,1-\frac{k}{2};\nu+1;
\left(\frac{\sqrt{\a}-\sqrt{\b}}{\sqrt{\a}+\sqrt{\b}} \right)^{2}\right)\nonumber\\
&=\frac{\Gamma\left(\frac{k}{2}+\nu \right)}{\pi^{k/2}2^{k-1}\Gamma(\nu+1)}\left(\frac{\sqrt{\a}-\sqrt{\b}}{\sqrt{\a}+\sqrt{\b}} \right)^{\nu}\left(\frac{1}{\sqrt{\a}}+\frac{1}{\sqrt{\b}} \right)^{k-2}\frac{1}{\sqrt{\a\b}}\cdot
{}_2F_1\left(1-\frac{k}{2}+\nu,1-\frac{k}{2};\nu+1;\left(\frac{\sqrt{\a}-\sqrt{\b}}{\sqrt{\a}+\sqrt{\b}} \right)^{2} \right).\label{ex1c}
\end{align}
Now put \eqref{ex1c} in \eqref{rkn}.  To obtain the final equality below, we define
 $r_k(0)=1$.  To that end,  
\begin{align}\label{rkn1}
&\sum_{n=1}^{\infty}r_k(n)I_{\nu}\left(\pi \sqrt{n}\left(\sqrt{\a}-\sqrt{\b}\right)\right)K_{\nu}\left(\pi \sqrt{n}\left(\sqrt{\a}+\sqrt{\b}\right)\right)\nonumber\\
&\quad=\frac{\Gamma(k/2+\nu)}{\pi^{k/2}2^{k-1}
\Gamma(\nu+1)}\sum_{n=1}^{\infty}\frac{r_k(n)}
{\sqrt{n+\a}\sqrt{n+\b}}\left(\frac{\sqrt{n+\a}-\sqrt{n+\b}}{\sqrt{n+\a}+\sqrt{n+\b}} \right)^{\nu}\nonumber\\
&\qquad\times\left(\frac{1}{\sqrt{n+\a}} +\frac{1}{\sqrt{n+\b}}\right)^{k-2}{}
_2F_{1}\left[\nu-k/2+1,1-k/2;\nu+1;\left(\frac{\sqrt{n+\a}-\sqrt{n+\b}}{\sqrt{n+\a}+\sqrt{n+\b}} \right)^{2} \right]\nonumber\\
&\qquad-\frac{1}{2\nu}\left( \frac{\sqrt{\a}-\sqrt{\b}}{\sqrt{\a}+\sqrt{\b}}\right)^{\nu}+\frac{\Gamma\left(\frac{k}{2}+\nu \right)}{\pi^{k/2}2^{k-1}\Gamma(\nu+1)}\left(\frac{\sqrt{\a}-\sqrt{\b}}{\sqrt{\a}+\sqrt{\b}} \right)^{\nu}\left(\frac{1}{\sqrt{\a}}+\frac{1}{\sqrt{\b}} \right)^{k-2}\frac{1}{\sqrt{\a\b}}\nonumber\\
&\qquad\times{}_2F_1\left(1-\frac{k}{2}+\nu,1-\frac{k}{2};\nu+1;
\left(\frac{\sqrt{\a}-\sqrt{\b}}{\sqrt{\a}+\sqrt{\b}} \right)^{2} \right)\nonumber\\
&\quad=-\frac{1}{2\nu}\left( \frac{\sqrt{\a}-\sqrt{\b}}{\sqrt{\a}+\sqrt{\b}}\right)^{\nu}
+\frac{\Gamma(k/2+\nu)}{\pi^{k/2}2^{k-1}\Gamma(\nu+1)}
\sum_{n=0}^{\infty}\frac{r_k(n)}{\sqrt{n+\a}\sqrt{n+\b}}
\left(\frac{\sqrt{n+\a}-\sqrt{n+\b}}{\sqrt{n+\a}+\sqrt{n+\b}} \right)^{\nu}\nonumber\\
&\qquad\times\left(\frac{1}{\sqrt{n+\a}} +\frac{1}{\sqrt{n+\b}}\right)^{k-2}{}_2F_1\left(1-\frac{k}{2}+\nu,1-\frac{k}{2};\nu+1;
\left(\frac{\sqrt{n+\a}-\sqrt{n+\b}}{\sqrt{n+\a}+\sqrt{n+\b}} \right)^{2} \right).
\end{align}

By a different method, the identity \eqref{rkn1} was also established in \cite[Theorem 1.6]{bdkz}.

\section{Example: Ramanujan's tau-function $\tau(n)$}

Let $\t(n)$ denote Ramanujan's famous arithmetical tau-function. Recall the associated facts and parameters given in
\eqref{50a}--\eqref{exam4}.
Then, from Theorem \ref{maintheorem2}, for $\Re(\nu)>-13/2$,
\begin{align}
&\sum_{n=1}^{\infty}\t(n)I_{\nu+1}\left(\pi \sqrt{n}\left(\sqrt{\a}-\sqrt{\b}\right)\right)K_{\nu+1}\left(\pi \sqrt{n}\left(\sqrt{\a}+\sqrt{\b}\right)\right)\nonumber\\
&\quad=\frac{2(2\pi)^{-12}\Gamma(13+\nu)}{\Gamma(\nu+2)
}\sum_{n=1}^{\infty}\frac{\tau(n)}{\sqrt{4n+\a}\sqrt{4n+\b}}
\left(\frac{\sqrt{4n+\a}-\sqrt{4n+\b}}{\sqrt{4n+\a}+\sqrt{4n+\b}} \right)^{\nu+1}\nonumber\\
&\qquad\times\left(\frac{1}{\sqrt{4n+\a}} +\frac{1}{\sqrt{4n+\b}}\right)^{22}
{}_2F_{1}\left[\nu-10,-11;\nu+2;\left(\frac{\sqrt{4n+\a}-\sqrt{4n+\b}}{\sqrt{4n+\a}+\sqrt{4n+\b}} \right)^{2} \right].\label{c1}
\end{align}
Letting $\nu=-\frac12$ in \eqref{c1} and using \eqref{I1} and \eqref{I2},
we are led to
\begin{align}\label{tau123}
&\frac{1}{\pi\sqrt{\a-\b}}\sum_{n=1}^{\infty}\frac{\tau(n)}{\sqrt{n}}
e^{-\pi\sqrt{n}(\sqrt{\a}+\sqrt{\b})}\sinh(\pi\sqrt{n}(\sqrt{\a}-\sqrt{\b}))\nonumber\\
=&\frac{2(2\pi)^{-12}\Gamma(25/2)}{\Gamma(3/2)}
\sum_{n=1}^{\infty}\frac{\tau(n)}{\sqrt{4n+\a}\sqrt{4n+\b}}
\left(\frac{\sqrt{4n+\a}-\sqrt{4n+\b}}{\sqrt{4n+\a}+\sqrt{4n+\b}} \right)^{1/2}\nonumber\\
&\times\left(\frac{1}{\sqrt{4n+\a}} +\frac{1}{\sqrt{4n+\b}}\right)^{22}
{}_2F_{1}\left[-21/2,-11;3/2;\left(\frac{\sqrt{4n+\a}-\sqrt{4n+\b}}{\sqrt{4n+\a}+\sqrt{4n+\b}} \right)^{2} \right].
\end{align}
Employing \cite[p.~461, no.~107]{pru}
\begin{align*}
{}_2F_{1}\left(a,a+1/2;3/2;z \right)=\df{1}{2(2a-1)\sqrt{z}}\left\{(1-\sqrt{z})^{1-2a}-(1+\sqrt{z})^{1-2a} \right\},
\end{align*}
with $a=-11$, we find that
\begin{equation}\label{c2}
{}_2F_{1}\left(-21/2,-11;3/2;z \right)=-\df{1}{46\sqrt{z}}\left\{(1-\sqrt{z})^{23}-(1+\sqrt{z})^{23} \right\}.
\end{equation}
With \eqref{c2} in \eqref{tau123} and with considerable simplification, we deduce that
\begin{gather}
\sum_{n=1}^{\infty}\frac{\tau(n)}{\sqrt{n}}e^{-\pi\sqrt{n}(\sqrt{\a}+\sqrt{\b})}\sinh(\pi\sqrt{n}(\sqrt{\a}-\sqrt{\b}))\notag\\
=2\frac{3\cdot5\cdots21}{\pi^{11}}\sum_{n=1}^{\infty}\tau(n)
\left(\frac{1}{(4n+\b)^{23/2}}-\frac{1}{(4n+\a)^{23/2}}\right).\label{tau-1/2}
\end{gather}
If we differentiate both sides of \eqref{tau-1/2} with respect to $\alpha$, let $\beta=\alpha$,  and simplify, we find that
\begin{equation*}
\sum_{n=1}^{\infty}\tau(n)e^{-2\pi\sqrt{n\alpha}}=2\frac{3\cdot5\cdots21\cdot23}{\pi^{12}}
\sum_{n=1}^{\infty}\frac{\sqrt{\alpha}\,\tau(n)}{(4n+\alpha)^{25/2}},
\end{equation*}
which, with $\alpha=s^2/(4\pi^2)$, gives \cite[Equation (7.4)]{BDGZ1}
\begin{equation*}
\sum_{n=1}^{\infty}\tau(n)e^{-s\sqrt{n}}
=2^{36}\pi^{23/2}\Gamma\left(\frac{25}{2}\right)\sum_{n=1}^{\infty}\df{s\tau(n)}{(s^2+16\pi^2n)^{25/2}}.
\end{equation*}

\section{Example: Primitive Dirichlet characters}

Let $\chi$ denote a primitive character modulo $q$. Depending on the parity of $\chi$, we separate two cases. First,  consider odd $\chi$.  Recall that the functional equations for the associated Dirichlet $L$-series is given in \eqref{funcequa1}, the Gauss sum $\tau(\chi)$ is defined in \eqref{gauss}, and the relevant parameters are given in \eqref{exam5}.
Consequently, by Theorem \ref{maintheorem2} and the fact that $Q_0(x)\equiv0$, for $\Re(\nu)>-5/2,$
\begin{align}\label{chifinal}
&\sum_{n=1}^{\infty}n\chi(n)I_{\nu+1}\left(\frac{\pi n}{\sqrt{2q}}\left(\sqrt{\a}-\sqrt{\b}\right)\right)K_{\nu+1}\left(\frac{\pi n}{\sqrt{2q}}\left(\sqrt{\a}+\sqrt{\b}\right)\right)\nonumber\\
&\quad=\frac{-i\pi^{-3/2}\Gamma(\nu+5/2)}{\sqrt{2q}\Gamma(\nu+2)}\tau(\chi)
\sum_{n=1}^{\infty}\frac{n\bar{\chi}(n)}{\sqrt{\left(\frac{2n^2}{q}+\a\right)}
\sqrt{\left(\frac{2n^2}{q}+\b\right)}}\left(\frac{\sqrt{\frac{2n^2}{q}+\a}
-\sqrt{\frac{2n^2}{q}+\b}}{\sqrt{\frac{2n^2}{q}+\a}+\sqrt{\frac{2n^2}{q}+\b}} \right)^{\nu+1}\nonumber\\
&\qquad\times\left(\frac{1}{\sqrt{\left(\frac{2n^2}{q}+\a\right)}} +\frac{1}{\sqrt{\left(\frac{2n^2}{q}+\b\right)}}\right)
{}_2F_{1}\left[\nu+1/2,-1/2;\nu+2;\left(\frac{\sqrt{\frac{2n^2}{q}+\a}-\sqrt{\frac{2n^2}{q}+\b}}
{\sqrt{\frac{2n^2}{q}+\a}+\sqrt{\frac{2n^2}{q}+\b}} \right)^{2} \right]\nonumber\\
&\quad=\frac{-i\pi^{-3/2}\Gamma(\nu+5/2)}{\sqrt{2q}\Gamma(\nu+2)}\tau(\chi)
\sum_{n=1}^{\infty}\frac{n\bar{\chi}(n)}{\left(\frac{2n^2}{q}
+\a\right)\left(\frac{2n^2}{q}+\b\right)}
\frac{\left(\sqrt{\frac{2n^2}{q}+\a}-\sqrt{\frac{2n^2}{q}+\b}\right)^{\nu+1}}
{\left(\sqrt{\frac{2n^2}{q}+\a}+\sqrt{\frac{2n^2}{q}+\b}\right)^{\nu}} \nonumber\\
&\qquad\times{}_2F_{1}\left[\nu+1/2,-1/2;\nu+2;\left(\frac{\sqrt{\frac{2n^2}{q}+\a}-
\sqrt{\frac{2n^2}{q}+\b}}{\sqrt{\frac{2n^2}{q}+\a}+\sqrt{\frac{2n^2}{q}+\b}} \right)^{2} \right].
\end{align}

Letting $\nu=-1/2$ in \eqref{chifinal}, using \eqref{I1} and \eqref{I2}, appealing to the trivial fact,
\begin{align*}
{}_2F_{1}\left(0,-1/2;3/2;x \right)=1,
\end{align*}
and multiplying both sides by $\pi\sqrt{\alpha-\beta}/\sqrt{2q}$, we deduce that
\begin{align}\label{chi2}
&\sum_{n=1}^{\infty}\chi(n)e^{-\frac{\pi n}{\sqrt{2q}}\left(\sqrt{\a}+\sqrt{\b}\right)}\sinh\left(\frac{\pi n}{\sqrt{2q}}\left(\sqrt{\a}-\sqrt{\b}\right)\right)\nonumber\\
&\quad=\frac{-i\pi^{-1/2}\Gamma(2)\sqrt{\a-\b}}{2q\Gamma(3/2)}
\tau(\chi)\sum_{n=1}^{\infty}\frac{n\bar{\chi}(n)}
{\left(\frac{2n^2}{q}+\a\right)\left(\frac{2n^2}{q}+\b\right)}
\frac{\left(\sqrt{\frac{2n^2}{q}+\a}-\sqrt{\frac{2n^2}{q}+\b}\right)^{1/2}}
{\left(\sqrt{\frac{2n^2}{q}+\a}+\sqrt{\frac{2n^2}{q}+\b}\right)^{-1/2}}\nonumber\\
&\quad=\frac{-iq\tau(\chi)\left(\a-\b\right)}{\pi}\sum_{n=1}^{\infty}\frac{n\bar{\chi}(n)}{\left(2n^2+\a q\right)\left(2n^2+\b q\right)}.
\end{align}

Next, let $\chi$ be even.  Recall that the functional equation and relevant parameters are given in \eqref{funcequa2} and \eqref{exam6}, respectively. Therefore, by Theorem \ref{maintheorem2}, for $\Re(\nu)>-3/2,$
\begin{align}\label{c6}
&\sum_{n=1}^{\infty}\chi(n)I_{\nu+1}\left(\frac{\pi n}{\sqrt{2q}}\left(\sqrt{\a}-\sqrt{\b}\right)\right)K_{\nu+1}\left(\frac{\pi n}{\sqrt{2q}}\left(\sqrt{\a}+\sqrt{\b}\right)\right)=\nonumber\\
&\quad=\frac{\sqrt{2}\Gamma(\nu+3/2)}{\sqrt{\pi q}\Gamma(\nu+2)}\tau(\chi)\sum_{n=1}^{\infty}\bar{\chi}(n)
\frac{\left(\sqrt{\frac{2n^2}{q}+\a}-\sqrt{\frac{2n^2}{q}+\b}\right)^{\nu+1}}
{\left(\sqrt{\frac{2n^2}{q}+\a}+\sqrt{\frac{2n^2}{q}+\b}\right)^{\nu+2}} \nonumber\\
&\qquad\times{}_2F_{1}\left[\nu+3/2,1/2;\nu+2;\left(\frac{\sqrt{\frac{2n^2}{q}+\a}
-\sqrt{\frac{2n^2}{q}+\b}}{\sqrt{\frac{2n^2}{q}+\a}+\sqrt{\frac{2n^2}{q}+\b}} \right)^{2} \right].
\end{align}
Letting $\nu=-1/2$ in \eqref{c6} and using the evaluation \cite[p.~1067, Formula 9.121, no.~7]{gr}
\begin{align}\label{log}
{}_2F_{1}\left(1,1/2;3/2;x \right)=\frac{1}{2\sqrt{x}}\log\left(\frac{1+\sqrt{x}}{1-\sqrt{x}} \right),
\end{align}
we obtain, after considerable simplification,
\begin{align}\label{14.5}
&\sum_{n=1}^{\infty}\frac{\chi(n)}{n}e^{-\frac{\pi n}{\sqrt{2q}}\left(\sqrt{\a}+\sqrt{\b}\right)}\sinh\left(\frac{\pi n}{\sqrt{2q}}\left(\sqrt{\a}-\sqrt{\b}\right)\right)
=\frac{\tau(\chi)}{2q}\sum_{n=1}^{\infty}\bar{\chi}(n)\log\left(\frac{2n^2+\a q}{2n^2+\b q}\right).
\end{align}

Equations \eqref{chifinal}, \eqref{chi2}, \eqref{c6}, and \eqref{14.5} are new.

\section{A Generalization of a Theorem of G.~N.~Watson}
The functional equation of the Riemann zeta function is given by \cite[p.~14]{edwards}
\begin{equation*}
\pi^{-s/2}\Gamma(s/2)\zeta(s)=\pi^{-(1-s)/2}\Gamma((1-s)/2)\zeta(1-s).
\end{equation*}
Hence, replacing $s$ by $2s$, we see that it can be transformed into the form \eqref{19} with $\delta=1/2$, $a(n)=b(n)=1$, and $\lambda_n=\mu_n=n^2/2$.
Note that in \eqref{22},
\begin{align*}
Q_0(x)=-\frac12+\sqrt{2x}.
\end{align*}
Employing \eqref{gr} with $x$ replaced by $\sqrt{x}$, and then with $\nu$ replaced by $\nu+1$, and letting $\lambda =0$, $a=\pi\left(\sqrt{\a}+\sqrt{\b}\right)$, and $b=\pi \left(\sqrt{\a}-\sqrt{\b}\right)$, we find that Theorem \ref{maintheorem2} yields, for $\Re(\nu)>-1/2,$
\begin{align*}
&\frac{1}{4(\nu+1)}\left( \frac{\sqrt{\a}-\sqrt{\b}}{\sqrt{\a}+\sqrt{\b}}\right)^{\nu+1}+\sum_{n=1}^{\infty}I_{\nu+1}\left(\frac{\pi n}{\sqrt2}\left(\sqrt{\a}-\sqrt{\b}\right)\right)K_{\nu+1}\left(\frac{\pi n}{\sqrt2}\left(\sqrt{\a}+\sqrt{\b}\right)\right)\nonumber\\
=&\frac{\Gamma(\nu+3/2)}{\sqrt{2\pi}\Gamma(\nu+2)} \frac{\left(\sqrt{\a}-\sqrt{\b}\right)^{\nu+1}}{\left(\sqrt{\a}+\sqrt{\b}\right)^{\nu+2}}\cdot
{}_2F_{1}\left(\nu+3/2,1/2;\nu+2;\left( \frac{\sqrt{\a}-\sqrt{\b}}{\sqrt{\a}+\sqrt{\b}}\right)^{2} \right)\nonumber\\
&+\frac{\sqrt{2}\Gamma(\nu+3/2)}{\sqrt{\pi}\Gamma(\nu+2)}
\sum_{n=1}^{\infty}\frac{\left(\sqrt{2n^2+\a}
-\sqrt{2n^2+\b}\right)^{\nu+1}}{\left(\sqrt{2n^2+\a}+\sqrt{2n^2+\b}\right)^{\nu+2}}
\cdot{}_2F_{1}\left[\nu+3/2,1/2;\nu+2;\left(\frac{\sqrt{2n^2+\a}-\sqrt{2n^2+\b}}{\sqrt{2n^2+\a}+\sqrt{2n^2+\b}} \right)^{2} \right].
\end{align*}
Replacing $\a$ by $2\a$ and $\b$ by $2\b$, we find that
\begin{align}\label{an1}
	&\frac{1}{4(\nu+1)}\left( \frac{\sqrt{\a}-\sqrt{\b}}{\sqrt{\a}+\sqrt{\b}}\right)^{\nu+1}+\sum_{n=1}^{\infty}I_{\nu+1}\left(\pi n\left(\sqrt{\a}-\sqrt{\b}\right)\right)K_{\nu+1}\left(\pi n\left(\sqrt{\a}+\sqrt{\b}\right)\right)\nonumber\\
	=&\frac{\Gamma(\nu+3/2)}{2\sqrt{2\pi}\Gamma(\nu+2)} \frac{\left(\sqrt{\a}-\sqrt{\b}\right)^{\nu+1}}{\left(\sqrt{\a}+\sqrt{\b}\right)^{\nu+2}}\cdot
	{}_2F_{1}\left(\nu+3/2,1/2;\nu+2;\left( \frac{\sqrt{\a}-\sqrt{\b}}{\sqrt{\a}+\sqrt{\b}}\right)^{2} \right)\nonumber\\
	&+\frac{\Gamma(\nu+3/2)}{\sqrt{\pi}\Gamma(\nu+2)}
	\sum_{n=1}^{\infty}\frac{\left(\sqrt{n^2+\a}
		-\sqrt{n^2+\b}\right)^{\nu+1}}{\left(\sqrt{n^2+\a}+\sqrt{n^2+\b}\right)^{\nu+2}}
	\cdot{}_2F_{1}\left[\nu+3/2,1/2;\nu+2;\left(\frac{\sqrt{n^2+\a}-\sqrt{n^2+\b}}{\sqrt{n^2+\a}+\sqrt{n^2+\b}} \right)^{2} \right].
\end{align}
Dividing both sides by $(\sqrt{\a}-\sqrt{b})^{\nu+1}$, letting $\a\to\b$, multiplying both sides of the resulting identity by $2(\nu+1)\G(\nu+1)(2\sqrt{\b})^{\nu+1}$, replacing $\nu$ by $\nu-1$ and $\b$ by $z^2/(4\pi^2)$, and rearranging, for $\Re(z)>0$, we recover an important result of Watson \cite[Equation (4)]{watsonselfreciprocal}:
\begin{align*}
\frac{1}{2}\G(\nu)+2\sum_{n=1}^\infty \left(\frac{1}{2}nz\right)^\nu K_\nu(nz)&=\G\left(\frac{1}{2}\right)\G\left(\nu+\frac{1}{2}\right)z^{2\nu}\left\{\frac{1}{z^{2\nu+1}}+2
\sum_{n=1}^\infty \frac{1}{(z^2+4n^2\pi^2)^{\nu+\frac{1}{2}}}\right\}.
\end{align*}

We now provide a generalization of yet another identity of Watson \cite[Equation (6)]{watsonselfreciprocal}.

\begin{corollary}\label{corK}
Let $K(k)$ denote the complete elliptic integral of the first kind defined by
\begin{equation}\label{kk}
	K(k):=\int_{0}^{\pi/2}\frac{d\theta}{\sqrt{1-k^2\sin^{2}(\theta)}},\qquad 0\leq |k| <1.
\end{equation}
For $\Re(\sqrt{\alpha})>\Re(\sqrt{\beta})>0$,
\begin{align}\label{finalWatson}
	&\sum_{n=1}^{\infty}I_{0}\left(\pi n\left(\sqrt{\a}-\sqrt{\b}\right)\right)K_{0}\left(\pi n\left(\sqrt{\a}+\sqrt{\b}\right)\right)\notag\\=&\frac{1}{\pi\left(\sqrt{\a}+\sqrt{\b}\right)}K\left(\left( \frac{\sqrt{\a}-\sqrt{\b}}{\sqrt{\a}+\sqrt{\b}}\right)^{2}\right)
	+\frac{\gamma+\log\left(\sqrt{\
			\a}+\sqrt{\b} \right)-\log 4}{2}\notag\\&+\sum_{n=1}^{\infty}\left\{\frac{2}{\pi\left(\sqrt{n^2+\a}+\sqrt{n^2+\b}\right)} K\left(\left( \frac{\sqrt{n^2+\a}-\sqrt{n^2+\b}}{\sqrt{n^2+\a}+\sqrt{n^2+\b}}\right)^{2}\right)-\frac{1}{2n}\right\}.
\end{align}	
\end{corollary}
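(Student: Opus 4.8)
The plan is to obtain \eqref{finalWatson} as the value at $\nu=-1$ of the one-parameter identity \eqref{an1}, which is the specialization of Theorem \ref{maintheorem2} to the Riemann zeta function. Abbreviate $t:=\dfrac{\sqrt{\a}-\sqrt{\b}}{\sqrt{\a}+\sqrt{\b}}$ and $t_n:=\dfrac{\sqrt{n^2+\a}-\sqrt{n^2+\b}}{\sqrt{n^2+\a}+\sqrt{n^2+\b}}$, and let $A_n(\nu)$ and $T_1(\nu)$ denote, respectively, the $n$-th summand of the series on the right of \eqref{an1} and the hypergeometric term that precedes it. Setting $\nu=-1$ directly is impossible: the isolated term $\dfrac{t^{\nu+1}}{4(\nu+1)}$ on the left of \eqref{an1} and the series $\sum_{n}A_n(\nu)$ on the right each have a simple pole at $\nu=-1$ — indeed $A_n(\nu)=O(n^{-2\Re(\nu)-3})$, so $\sum_n A_n(\nu)$ does not even converge there — and only a regularized combination has a finite limit.

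The first step is to split off the divergence of $\sum_n A_n(\nu)$ by a comparison series. Put $g(\nu):=\dfrac{\Gamma(\nu+\frac32)\,(\a-\b)^{\nu+1}}{\sqrt{\pi}\,\Gamma(\nu+2)\,2^{2\nu+3}}$ and $B_n(\nu):=g(\nu)\,n^{-2\nu-3}$, chosen so that $g(-1)=\tfrac12$ and $B_n(-1)=\dfrac{1}{2n}$. From $\sqrt{n^2+\a}+\sqrt{n^2+\b}=2n\bigl(1+\tfrac{\a+\b}{4n^2}+O(n^{-4})\bigr)$, the identity $\sqrt{n^2+\a}-\sqrt{n^2+\b}=\dfrac{\a-\b}{\sqrt{n^2+\a}+\sqrt{n^2+\b}}$, the bound $t_n=O(n^{-2})$, and ${}_2F_1(a,b;c;0)=1$, one gets $A_n(\nu)=B_n(\nu)\bigl(1+O(n^{-2})\bigr)$ uniformly for $\nu$ in a fixed neighborhood of $-1$; hence $A_n(\nu)-B_n(\nu)=O(n^{-2\Re(\nu)-5})$, so $\sum_n\bigl(A_n(\nu)-B_n(\nu)\bigr)$ converges locally uniformly there, while $\sum_n B_n(\nu)=g(\nu)\,\zeta(2\nu+3)$. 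Thus \eqref{an1} (valid for $\Re(\nu)>-1$, by analytic continuation in $\nu$ if need be) may be rewritten as
\begin{align*}
\sum_{n=1}^{\infty}I_{\nu+1}\!\left(\pi n(\sqrt{\a}-\sqrt{\b})\right)K_{\nu+1}\!\left(\pi n(\sqrt{\a}+\sqrt{\b})\right)
&=-\frac{t^{\nu+1}}{4(\nu+1)}+T_1(\nu)\\
&\quad+\sum_{n=1}^{\infty}\bigl(A_n(\nu)-B_n(\nu)\bigr)+g(\nu)\,\zeta(2\nu+3);
\end{align*}
both sides are analytic in a punctured neighborhood of $\nu=-1$ (the left side is entire in $\nu$, by \eqref{b5}, \eqref{b6}; on the right only $-\tfrac{t^{\nu+1}}{4(\nu+1)}$ and $g(\nu)\zeta(2\nu+3)$ are singular there, each with a simple pole), so the rearrangement persists throughout such a neighborhood.

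The second step is to pass to $\nu=-1$. Using $t^{\nu+1}=1+(\nu+1)\log t+O((\nu+1)^2)$, the Laurent expansion $\zeta(2\nu+3)=\dfrac{1}{2(\nu+1)}+\gamma+O(\nu+1)$, and $g(\nu)=\tfrac12+\tfrac{\nu+1}{2}\bigl(\log(\a-\b)-4\log 2\bigr)+O((\nu+1)^2)$ — the last from $\dfrac{g'}{g}(-1)=\psi(\tfrac12)-\psi(1)+\log(\a-\b)-2\log 2$ with $\psi(\tfrac12)=-\gamma-2\log 2$ and $\psi(1)=-\gamma$ — one checks that the two $\tfrac{1}{4(\nu+1)}$ poles cancel, so the right side is analytic at $\nu=-1$, and the left side becomes $\sum_n I_0(\pi n(\sqrt{\a}-\sqrt{\b}))K_0(\pi n(\sqrt{\a}+\sqrt{\b}))$. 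The finite remainder on the right is $T_1(-1)+\sum_n\bigl(A_n(-1)-B_n(-1)\bigr)$ plus the constant $\tfrac{\gamma}{2}+\tfrac14\bigl(\log(\a-\b)-\log t\bigr)-\log 2$, which collapses to $\dfrac{\gamma+\log(\sqrt{\a}+\sqrt{\b})-\log 4}{2}$ upon writing $\log(\a-\b)=\log(\sqrt{\a}-\sqrt{\b})+\log(\sqrt{\a}+\sqrt{\b})$ and $\log t=\log(\sqrt{\a}-\sqrt{\b})-\log(\sqrt{\a}+\sqrt{\b})$. Finally, the classical hypergeometric evaluation of the complete elliptic integral $K$ identifies $T_1(-1)$ with $\dfrac{1}{\pi(\sqrt{\a}+\sqrt{\b})}K\bigl(t^{2}\bigr)$ and $A_n(-1)-B_n(-1)$ with $\dfrac{2}{\pi(\sqrt{n^2+\a}+\sqrt{n^2+\b})}K\bigl(t_n^{2}\bigr)-\dfrac{1}{2n}$, which is exactly \eqref{finalWatson}.

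The main obstacle is the uniform estimate $A_n(\nu)-B_n(\nu)=O(n^{-3})$ for $\nu$ near $-1$, which is what licenses taking the limit term by term in $\sum_n\bigl(A_n(\nu)-B_n(\nu)\bigr)$; establishing it requires controlling, uniformly in $\nu$, the error terms in the expansions of $\sqrt{n^2+\a}\pm\sqrt{n^2+\b}$, of $t_n$, and of the factor ${}_2F_1\bigl(\nu+\tfrac32,\tfrac12;\nu+2;t_n^{2}\bigr)$. The rest — the Laurent expansions of $\zeta$ and $\Gamma$, the values of $\psi$ at $\tfrac12$ and $1$, the cancellation of the poles, the logarithm bookkeeping, and keeping track of the earlier rescaling $\a\mapsto 2\a,\ \b\mapsto 2\b$ behind \eqref{an1} — is routine.
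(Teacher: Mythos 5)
Your proposal is correct and follows essentially the same route as the paper's proof: regularize the series in \eqref{an1} by subtracting the comparison terms proportional to $n^{-2\nu-3}$, sum those to a multiple of $\zeta(2\nu+3)$ whose pole at $\nu=-1$ cancels that of the isolated term $t^{\nu+1}/(4(\nu+1))$, then let $\nu\to-1$ and invoke ${}_2F_1\left(\tfrac12,\tfrac12;1;x^2\right)=\tfrac{2}{\pi}K(x)$. Your handling of the locally uniform error estimate and the explicit Laurent expansions is merely more detailed than the paper's (which asserts these steps without computation), so there is nothing further to add.
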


\begin{proof}
Corollary \ref{corK} follows by analytically continuing \eqref{an1} to the region $\Re(\nu)>-3/2$ and then letting $\nu\to-1$. Since the argument is similar to that given in Section 5 of \cite{bdkz}, we discuss it only briefly here.

 Let $g(n)$ denote the $n^{\text{th}}$ summand in the series on the right-hand side of \eqref{an1}.
   It is not difficult to show that, as $n\to\infty$,
\begin{align*}
	g(n)\sim\frac{(\a-\b)^{\nu+1}}{(2n)^{2\nu+3}}.
\end{align*}
Therefore, for  $\Re(\nu)>-1$,
\begin{equation*}
	\sum_{n=1}^{\infty}g(n)=\sum_{n=1}^{\infty}\left(g(n)-\frac{(\a-\b)^{\nu+1}}{(2n)^{2\nu+3}}\right)
+\frac{(\a-\b)^{\nu+1}}{2^{2\nu+3}}\zeta(2\nu+3).
	\end{equation*}
Substituting this in \eqref{an1} and rearranging, we find that, for $\Re(\nu)>-1$,
\begin{align}\label{beforelimit}
&\sum_{n=1}^{\infty}I_{\nu+1}\left(\pi n\left(\sqrt{\a}-\sqrt{\b}\right)\right)K_{\nu+1}\left(\pi n\left(\sqrt{\a}+\sqrt{\b}\right)\right)\nonumber\\
&=\frac{\Gamma(\nu+3/2)}{2\sqrt{2\pi}\Gamma(\nu+2)} \frac{\left(\sqrt{\a}-\sqrt{\b}\right)^{\nu+1}}{\left(\sqrt{\a}+\sqrt{\b}\right)^{\nu+2}}
{}_2F_{1}\left(\nu+3/2,1/2;\nu+2;\left( \frac{\sqrt{\a}-\sqrt{\b}}{\sqrt{\a}+\sqrt{\b}}\right)^{2} \right)\nonumber\\
&\quad+\frac{\sqrt{2}\Gamma(\nu+3/2)}{\sqrt{\pi}\Gamma(\nu+2)}
\sum_{n=1}^{\infty}\left(g(n)-\frac{(\a-\b)^{\nu+1}}{(2n)^{2\nu+3}}\right)\nonumber\\
&\quad+(\sqrt{\a}-\sqrt{\b})^{\nu+1}\left(\frac{\Gamma(\nu+3/2)}{\sqrt{\pi}\Gamma(\nu+2)}
\frac{(\sqrt{\a}+\sqrt{\b})^{\nu+1}}{2^{2\nu+3}}\zeta(2\nu+3)
-\frac{1}{4(\nu+1)\left(\sqrt{\a}+\sqrt{\b}\right)^{\nu+1}}\right).
\end{align}

Observe that both sides of \eqref{beforelimit} are analytic in Re$(\nu)>-2$ with a removable singularity at $\nu=-1$, because
\begin{gather}
\lim_{\nu\to-1}\left(\frac{\Gamma(\nu+3/2)}{\sqrt{\pi}\Gamma(\nu+2)}
\frac{(\sqrt{\a}+\sqrt{\b})^{\nu+1}}{2^{2\nu+3}}\zeta(2\nu+3)
-\frac{1}{4(\nu+1)\left(\sqrt{\a}+\sqrt{\b}\right)^{\nu+1}}\right)\notag\\
=\frac{\gamma}{2}+\frac{1}{2}\log(\sqrt{\a}+\sqrt{\b})-\log 2,\label{limitnu}
\end{gather}
which can be seen from expanding each side of \eqref{limitnu} in Taylor series about $\nu=-1$.
Thus, letting $\nu\to-1$ on both sides of \eqref{beforelimit} and using \eqref{limitnu}, we arrive at \eqref{finalWatson}, where we used the identity \cite[p.~908, Formula 8.113, no.~2]{gr}
\begin{equation*}
{_2F_1}\left(\frac12,\frac12;1;x^2\right)=\frac{2}{\pi}K(x)
\end{equation*}
where $K(x)$ is defined in \eqref{kk}.
\end{proof}

As previously indicated, the identity \eqref{finalWatson} is a generalization of the following identity of Watson \cite{watsonselfreciprocal}.

\begin{corollary}\label{watsoncor} For
 $\Re( \beta)>0$,
\begin{equation}\label{wat}
2\sum_{n=1}^{\infty}K_{0}(n\beta)=\pi\left\{\frac{1}{\beta}
+2\sum_{n=1}^{\infty}\left(\frac{1}{\sqrt{\beta^2+4\pi^2n^2}}
-\frac{1}{2n\pi}\right)\right\}+\gamma+\log\left(\frac{\beta}{2}\right)-\log 2\pi.
\end{equation}
\end{corollary}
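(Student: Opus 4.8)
The plan is to derive \eqref{wat} as the specialization $\a\to\b$ of Corollary~\ref{corK}. Fix $\b$ with $\Re(\b)>0$ and let $\a\to\b$ in \eqref{finalWatson}. On the left-hand side, the asymptotic formulas \eqref{b5} and \eqref{b6} show that $I_{0}(\pi n(\sqrt{\a}-\sqrt{\b}))K_{0}(\pi n(\sqrt{\a}+\sqrt{\b}))$ decays exponentially in $n$, uniformly for $\a$ in a small neighbourhood of $\b$, so the series converges uniformly there; since $I_{0}(0)=1$, its limit is $\sum_{n=1}^{\infty}K_{0}(2\pi n\sqrt{\b})$. On the right-hand side, $K(k)$ is continuous at $k=0$ with $K(0)=\pi/2$, and the general term of the series in \eqref{finalWatson},
\begin{equation*}
\frac{2}{\pi(\sqrt{n^{2}+\a}+\sqrt{n^{2}+\b})}K\!\left(\left(\frac{\sqrt{n^{2}+\a}-\sqrt{n^{2}+\b}}{\sqrt{n^{2}+\a}+\sqrt{n^{2}+\b}}\right)^{2}\right)-\frac{1}{2n},
\end{equation*}
is $O(n^{-2})$ uniformly for $\a$ near $\b$, the $\tfrac{1}{2n}$ term cancelling the leading part of the first summand; hence this series also converges uniformly and one may pass to the limit term by term. (Equivalently, both sides of \eqref{finalWatson} are analytic in $\a$ in a full neighbourhood of $\b$ and agree on an open set, so the identity persists at $\a=\b$.)

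Performing the limit $\a\to\b$ in \eqref{finalWatson} with $K(0)=\pi/2$, and then multiplying through by $2$, I expect to reach
\begin{equation*}
2\sum_{n=1}^{\infty}K_{0}(2\pi n\sqrt{\b})=\frac{1}{2\sqrt{\b}}+\gamma+\tfrac{1}{2}\log\b-\log 2+\sum_{n=1}^{\infty}\left(\frac{1}{\sqrt{n^{2}+\b}}-\frac{1}{n}\right).
\end{equation*}
It remains only to change variables so as to recognize \eqref{wat}. Writing $\beta$ for the parameter of \eqref{wat} and putting $\sqrt{\b}=\beta/(2\pi)$ — so that $2\pi n\sqrt{\b}=n\beta$ and $\sqrt{n^{2}+\b}=\tfrac{1}{2\pi}\sqrt{4\pi^{2}n^{2}+\beta^{2}}$ — the left side becomes $2\sum_{n=1}^{\infty}K_{0}(n\beta)$, the term $\tfrac{1}{2\sqrt{\b}}$ becomes $\pi/\beta$, the constant $\tfrac{1}{2}\log\b-\log 2$ becomes $\log(\beta/2)-\log 2\pi$, and the series becomes $2\pi\sum_{n=1}^{\infty}\tfrac{1}{\sqrt{\beta^{2}+4\pi^{2}n^{2}}}-\sum_{n=1}^{\infty}\tfrac{1}{n}=\pi\cdot2\sum_{n=1}^{\infty}\left(\tfrac{1}{\sqrt{\beta^{2}+4\pi^{2}n^{2}}}-\tfrac{1}{2n\pi}\right)$. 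Assembling these pieces gives precisely \eqref{wat}.

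The only step requiring genuine care is the interchange of the limit $\a\to\b$ with the two infinite series, which is settled by the uniform convergence recorded above (exponential decay on the left, $O(n^{-2})$ on the right), together with the observation that \eqref{finalWatson} was established under the strict inequality $\Re(\sqrt{\a})>\Re(\sqrt{\b})$, so the limit is approached from within the region of validity and the limiting identity at $\a=\b$ then holds by continuity of both sides. Everything else is elementary bookkeeping with square roots and logarithms, together with the single special value $K(0)=\pi/2$; I do not anticipate any further obstacle.
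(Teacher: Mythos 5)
Your proposal is correct and follows exactly the paper's route: let $\a\to\b$ in \eqref{finalWatson}, use $\lim_{\a\to\b}I_{0}(\pi n(\sqrt{\a}-\sqrt{\b}))K_{0}(\pi n(\sqrt{\a}+\sqrt{\b}))=K_{0}(2\pi n\sqrt{\b})$ and $K(0)=\pi/2$, then rescale $\sqrt{\b}=\beta/(2\pi)$. Your added justification of the term-by-term limit (uniform exponential decay on the left, cancellation giving $O(n^{-2})$ on the right) and the explicit bookkeeping of the change of variables are details the paper calls ``trivial facts'' and omits, but they are accurate.
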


\begin{proof} If we let $\a\to\b$ in \eqref{finalWatson} and use the trivial facts
\begin{equation*}
\lim_{\a\to\b^{+}}I_{0}(\pi(\sqrt{n\a}-\sqrt{n\b}))K_{0}(\pi(\sqrt{n\a}+\sqrt{n\b}))
=K_{0}(2\pi\sqrt{n\b})
\end{equation*}
and $K(0)=\tfrac12 \pi$,
we obtain \eqref{wat}.
\end{proof}

\begin{remark}
Each of the identities  \eqref{c1}, \eqref{chifinal}, and \eqref{c6} can be analytically continued in the same manner as that for \eqref{an1} for Corollary \ref{corK}.
\end{remark}
Letting $\nu=-1/2$ in \eqref{an1}, and using \eqref{I1}, \eqref{I2}, and \eqref{log}, we obtain
\begin{align}\label{loglast}
&\frac{1}{2}\left(\frac{\sqrt{\a}-\sqrt{\b}}{\sqrt{\a}+\sqrt{\b}}\right)^{1/2}
+\frac{\sqrt{2}}{\pi\sqrt{\a-\b}}\sum_{n=1}^{\infty}
\df{e^{-\frac{\pi n}{\sqrt{2}}\left(\sqrt{\a}+\sqrt{\b}\right)}}{n}\sinh\left(\frac{\pi n}{\sqrt2}\left(\sqrt{\a}-\sqrt{\b}\right)\right)\nonumber\\
&=\frac{1}{2\sqrt{2}\pi\sqrt{\a-\b}}\log\left( \frac{\a}{\b}\right)+\frac{1}{\sqrt{2}\pi\sqrt{\a-\b}}\sum_{n=1}^{\infty}\log\left(\frac{2n^2+\a}{2n^2+\b}\right).
\end{align}
A rearrangement of  \eqref{loglast} leads to
\begin{gather}
\frac{\pi}{2}(\sqrt{\a}-\sqrt{\b})+\sqrt2\sum_{n=1}^{\infty}\frac{e^{-\frac{\pi n}{\sqrt2}\left(\sqrt{\a}+\sqrt{\b}\right)}}{n}\sinh\left(\frac{\pi n}{\sqrt2}\left(\sqrt{\a}-\sqrt{\b}\right)\right)\notag\\=\frac{1}{2\sqrt{2}}\log\left( \frac{\a}{\b}\right)+\frac{1}{\sqrt{2}}\sum_{n=1}^{\infty}\log\left(\frac{2n^2+\a}{2n^2+\b}\right).\label{c222}
\end{gather}
Using the elementary Maclaurin series
\begin{equation*}
-\log(1-x)=\sum_{k=1}^{\infty}\df{x^k}{k},\quad |x|<1,
\end{equation*}
in \eqref{c222}, we conclude that
\begin{align*}
&\frac{\pi}{2}(\sqrt{\a}-\sqrt{\b})+\frac{1}{\sqrt2}\log\left(\frac{1-e^{-\pi \sqrt{2\a}}}{1-e^{-\pi \sqrt{2\b}}} \right)=\frac{1}{2\sqrt2}\log\left( \frac{\a}{\b}\right)+\frac{1}{\sqrt{2}}\sum_{n=1}^{\infty}\log\left(\frac{2n^2+\a}{2n^2+\b}\right).
\end{align*}

\begin{center}
\textbf{Acknowledgements}
\end{center}

The first and second authors sincerely thank the MHRD SPARC project SPARC/2018-2019/P567/SL for their financial support.  The first author is also supported by a grant from the Simons Foundation.  The third author is a postdoctoral fellow at IIT Gandhinagar supported, in part, by the grant CRG/2020/002367

\end{document}